\numberwithin{equation}{section}
\newtheorem{theorem}{Theorem}[section]
\newtheorem{lemma}[theorem]{Lemma}
\newtheorem{proposition}[theorem]{Proposition}
\newtheorem{corollary}[theorem]{Corollary}
\theoremstyle{definition}
\newtheorem{definition}[theorem]{Definition}
\theoremstyle{remark}
\newcommand{\R}{\mathbb{R}}
\newcommand{\Z}{\mathbb{Z}}
\tikzset{surface/.style={draw=black, fill=white, fill opacity=.6}}
\begin{document}

\title{The symplectic form associated to a singular Poisson algebra}

\author[H.-C.~Herbig]{Hans-Christian Herbig\,\orcidlink{0000-0003-2676-3340}}
\address{Departamento de Matem\'{a}tica Aplicada, Universidade Federal do Rio de Janeiro,
Av. Athos da Silveira Ramos 149, Centro de Tecnologia - Bloco C, CEP: 21941-909 - Rio de Janeiro, Brazil}
\email{herbighc@gmail.com}

\author[W. Osnayder Clavijo E.]{
William Osnayder Clavijo Esquivel\,\orcidlink{0009-0000-4211-3300
}}
\address{Departamento de Matem\'{a}tica Aplicada, Universidade Federal do Rio de Janeiro,
Av. Athos da Silveira Ramos 149, Centro de Tecnologia - Bloco C, CEP: 21941-909 - Rio de Janeiro, Brazil}
\email{woclavijo@im.ufrj.br}

\author[C.~Seaton]{Christopher Seaton\,\orcidlink{ 0000-0001-8545-2599}}
\address{Skidmore College
815 North Broadway
Saratoga Springs, NY 12866, USA}
\email{cseaton@skidmore.edu}

\keywords{symplectic forms, singular Poisson varieties, double cone, orbifolds, Lie-Rinehart algebras, naive de Rham complex, differential spaces}
\subjclass[2020]{primary 70G45,	secondary 14B05}

\begin{abstract}
Given an affine Poisson algebra, that is singular one may ask whether there is an associated symplectic form. In the smooth case the answer is obvious: for the symplectic form to exist the Poisson tensor has to be invertible. In the singular case, however, derivations do not form a projective module and the nondegeneracy condition is more subtle. For a symplectic singularity one may naively ask if there is indeed an analogue of a symplectic form. We examine an example of a symplectic singularity, namely the double cone, and show that here such a symplectic form exists. We use the naive de Rham complex of a Lie-Rinehart algebra. Our analysis of the double cone uses Gröbner bases calculations. We also give an alternative construction of the symplectic form that generalizes to categorical quotients of cotangent lifted representations of finite groups. We use the same formulas to construct a symplectic form on the simple cone, seen as a Poisson differential space and generalize the construction to linear symplectic orbifolds. We present useful auxiliary results that enable to explicitly determine generators for the module of derivations an affine variety. The latter may be understood as a differential space.
\end{abstract}

\maketitle
\tableofcontents
\section{Introduction}\label{sec:intro}

Many geometric structures from differential geometry are defined in terms of tensors, i.e., in terms of multilinear algebra of tangent and cotangent bundles. In the presence of singularities, however, the modules of sections of those `canonical bundles' do not form projective modules. For this reason it is often not easy to generalize those geometric structures to singular situations. A noteworthy exception is Poisson geometry, since it is defined in terms of functions. The geometry of singular Poisson structures has been studied for decades in the context of symplectic reduction \cite{AGJ,SL,SniatyckiBook,HSScomp}, coadjoint orbit closures \cite{BaohuaFu} and moduli spaces \cite{Huebsch, HSSrat}. Many of those singular Poisson structures exhibit `symplectic' features. For example, symplectic reductions carry symplectic stratifications \cite{SL} and have typically symplectic singularities \cite{HSScomp}. However, we do not know of any attempt to develop a theory of symplectic forms for singular affine Poisson varieties. In this paper we suggest such an approach and show that it makes sense in the case of orbifolds. This is done in the framework of complex algebraic geometry and in the framework of Poisson differential spaces. We leave it to the future to elaborate our idea for more complicated geometries, such as symplectic reductions, coadjoint orbit closures, symplectic singularities and gauge theoretic moduli spaces (for more details see Section \ref{sec:out}).

There have been suggestions to generalize differential forms to singular spaces, see, e.g., \cite{Knighten, Huber, Kebekus} in the context of complex algebraic geometry or \cite{Watts,axioms} in the context of differential spaces.\footnote{We do not attempt here to elaborate here on the history of the subject.} We use the language of Lie-Rinehart algebras as a unifying principle and base our considerations on what we call the \textit{naive de Rham complex} (see Section \ref{sec:naive}), which originates from George Rinehart's work \cite{Rinehart}. In complex algebraic geometry this is referred to as the de Rham complex of \emph{reflexive differential forms} \cite{Huber, Kebekus} or \emph{Zarisky differentials} \cite{Knighten}. The difficulties in working with differential forms in the presence of singularities are due to the fact that modules of differentials have torsion and are not reflexive. In particular, the dual module to the module of derivations of a singular algebra is the reflexive closure of the module of differentials.

It should be also said that algebraic geometry is not the proper setting for symplectic geometry since Hamiltonian flows do not respect polynomial observables.
This problem could be cured by moving to the holomorphic framework, however, this is not the most natural (i.e., physically adequate) Ansatz. The appropriate framework for singular symplectic geometry appears to be that of a Poisson differential space $\left(\mathcal X,\mathcal C^\infty (\mathcal X) ,\{\ ,\  \} \right)$ in the sense of \cite{FHS,SniatyckiBook}, or variations thereof such as, e.g., \cite{SL,stratKaeh}. This is because symplectic reductions by compact group actions and gauge theoretic moduli spaces are to be described in this language. The notion of the module of differentials $\mathcal D_{\mathcal C^\infty(\mathcal X)|\mathbb R}$ for a differential space has been developed in \cite{Navarro} and the idea of using the naive de Rham complexes of the Lie-Rinehart algebra $(\mathcal C^\infty( X) ,\operatorname{Der}\left(\mathcal C^\infty (\mathcal X)\right)$ goes through without difficulty.

Let us fix some notations and conventions. Throughout this article $\boldsymbol{k}$ denotes a field of characteristic zero. If $A$ is a commutative $\boldsymbol{k}$-algebra we denote by
$ \operatorname{Der}( A) =\left\{X:A\rightarrow A\ |\ \forall a,b\in A: \ \ X( ab) =X( a) b+aX( b)\right\}$ the $A$-module of $A$-valued derivations. We consider a polynomial $\boldsymbol{k}$-algebra $P=\boldsymbol{k}\left[ x^{1} ,\dotsc ,x^{n}\right]$ with symplectic Poisson bracket $\{\ ,\ \}$. Let us write for the associated Poisson tensor $\Pi ^{ij} =\left\{x^{i} ,x^{j}\right\} \in P$. Let $I=( f_{1} ,\dotsc ,f_{\ell })$ be a Poisson ideal in $P$, i.e., a multiplicative ideal such that $\ \{I,P\} \subseteq I$ (for examples see \cite{higherKoszul}). Its generators $\ f_{\mu }$ have the property that $\left\{x^{i} ,f_{\mu }\right\} =\sum _{\nu =1}^{\ell } Z_{\mu }^{i\nu } f_{\nu }$ for some (in general, non-unique) $Z_{\mu }^{i\nu } \in P$. As the $Z_{\mu }^{i\nu }$ can be interpreted as Christoffel symbols of a connection (see \cite{Nambuffel,higherKoszul}) of the conormal module $I/I^{2}$ we refer to them as the \textit{Poissoffel symbols} of the Poisson ideal. The quotient $A=P/I$ becomes a Poisson $\boldsymbol{k}$-algebra. We refer to this type of algebra as an \textit{affine Poisson algebra}. We apologize to the algebraic readership that we adopt the physicist's habit to use upper indices.

Let us outline the plan of the paper. In Section \ref{sec:naive} we recall the naive de Rham complex of a Lie-Rinehart algebra and define what we mean by a non-degenerate $2$-form. In Section \ref{sec:main} we explain our definition of a symplectic form, generalizing the usual definition to the singular case. In Section \ref{sec:dc} we find such a symplectic form on the double cone. In Section \ref{sec:expl}
we explain the findings of Section \ref{sec:dc} in terms of orbifold symplectic forms. In Section \ref{sec:Diff} we develop a version of Section \ref{sec:dc} for the simple cone in the framework of Poisson differential spaces. In Section \ref{sec:orbifold} we construct symplectic forms on linear symplectic orbifolds, seen as Poisson differential spaces. In the final Section \ref{sec:out} we suggest a plan for further, more profound, investigations.

\vspace{2mm}

\noindent \emph{Acknowledgements.} HCH would like to thank Ines Kath for helping him think and Henrique Bursztyn sending a copy of \cite{BierstoneIMPA}. We are deeply indepted to Gerald Schwarz for his vision and encouragement.

\section{Lie-Rinehart algebras and the naive de Rham complex}\label{sec:naive}

In order to be able to explain our main idea we need to recall some basic material about the cohomology of Lie-Rinehart algebras (cf. \cite{Rinehart}).

 A \textit{Lie-Rinehart algebra} $ ( L,A)$ is a commutative $ \boldsymbol{k}$-algebra $ A$ and an $ A$-module $ L$ such that
\begin{enumerate}
    \item
 $ L$ is a $ \boldsymbol{k}$-Lie algebra,
\item  $ L$ acts on $ A$ by derivations via $ L\otimes _{\boldsymbol{k}} A\rightarrow A,\ X\otimes a\mapsto X( a)$,
\item  $ [ X,aY] =X( a) Y+a[ X,Y]$ for all $ X,Y\in L$ and $ a\in A$,
\item  $ ( aX)( b) =aX( b)$ for all $ X\in L$ and $ a,b\in A$.
\end{enumerate}
The $ A$-module of $ n$-cochains of the \textit{naive de Rham complex} of the Lie-Rinehart algebra $( L,A)$ is given by the space $ \operatorname{Alt}^{n}_A( L,A)$ of alternating $A$-linear forms of arity $n$ with values in $A$. The differential $ \operatorname{d} :\operatorname{Alt}^{n}_A( L,A)\rightarrow \operatorname{Alt}_A^{n+1}( L,A)$ of the naive de Rham complex is given by the \emph{Koszul formula}
\begin{align*}
\left(\operatorname{d} \omega \right)( X_{0} ,X_{1} ,\dotsc ,X_{n}) &=\sum _{i=0}^{n}( -1)^{i} X_{i} \left(\omega \left( X_{0} ,\dotsc ,\widehat{X_{i}} ,\dotsc ,X_{n}\right)\right)\\
& \ \ \ \ \ \ \ \ \ +\sum _{i< j}^{n}( -1)^{i+j}\omega\left([X_i,X_j],( X_{0} ,\dotsc ,\widehat{X_{i}} ,\dotsc ,\widehat{X_{j}} ,\dotsc ,X_{n}\right) ,
\end{align*}
where $ X_{0} ,X_{1} ,\dotsc ,X_{n} \in L$ and $ \operatorname{Alt}^{n}_A( L,A)$. The $ \widehat{\ \ }$ indicates omission of the corresponding term.
It is well-known (see \cite{Palais}) that $ \left(\operatorname{Alt}^{n}_A( L,A) ,\operatorname{d}\right)$ forms a differential graded $ \boldsymbol{k}$-algebra with respect to the product $ \cup :\operatorname{Alt}^{p}_A( L,A) \times \operatorname{Alt}^{q}_A( L,A)\rightarrow \operatorname{Alt}^{p+q}_A( L,A)$
\begin{align*}
 \omega \cup \eta ( X_{1} ,X_{2} ,\dotsc ,X_{p+q}) =\sum _{\sigma \in \operatorname{Sh}_{p,q}}( -1)^{\sigma } \omega ( X_{\sigma ( 1)} ,\dotsc ,X_{\sigma ( q)}) \eta ( X_{\sigma ( p+1)} ,\dotsc ,X_{\sigma ( p+q)})
\end{align*}
where $ \operatorname{Sh}_{p,q}$ denotes the set of $ p,q$-shuffle permutations. If $ L=\operatorname{Der}( A)$ we use the notation $ \operatorname{d_{dR}} :=\operatorname{d}$. Rinehart \cite{Rinehart} has shown that if $ L$ is a projective $ A$-module then the $ n$th cohomology of $ \left(\operatorname{Alt}^{n}_A( L,A) ,\operatorname{d}\right)$ computes $ \operatorname{Ext}_{U( L,A)}^{n}( A,A)$, where $ U( L,A)$ denotes Rinehart's universal enveloping algebra of $ ( L,A)$. If $ L$ is not projective there is no such result to be expected in general. For this reason the complex is called naive.

For affine $ \mathbb{C}$-algebras $ A=\mathbb{C}\left[ x^{1} ,\dotsc ,x^{n}\right] /I$ the naive de Rham complex of $ \left(\operatorname{Der}( A) ,A\right)$ appears to be isomorphic to the complex of \textit{reflexive differential forms}, see, e.g., \cite{Huber}.

We say that $ \omega \in \operatorname{Alt}_{A}^{2}( L,A)$ is \textit{non-degenerate} if for each $ X\in L$ we have that $ \omega ( X,\ ) =0$, seen as an element in $ \operatorname{Hom}_{A}( L,A) =\operatorname{Alt}_{A}^{1}( L,A)$, implies $ X=0$. Note that if $ A=\boldsymbol{k}$ and $ \dim_{\boldsymbol{k}} L< \infty $ for a non-degenerate $ \omega \in \operatorname{Alt}_{\boldsymbol{k}}^{2}( L,\boldsymbol{k})$ then $ L\rightarrow L^{*} =\operatorname{Hom}_{\boldsymbol{k}}( L,\boldsymbol{k}) ,\ X\mapsto \omega ( X,\ )$ is injective and hence onto. However, the $ A$-linear map $ L\rightarrow L^{\lor } :=\operatorname{Hom}_{A}( L,A) ,\ X\mapsto \omega ( X,\ )$ is not onto in general, even if $ \omega $ is non-degenerate. We write $ \ker \omega =\{X\in L\ |\ \forall Y\in L:\ \omega ( X,Y) =0\}$ for the \textit{kernel} of $ \omega $.

\section{The main idea}\label{sec:main}
As a warm-up and to motivate our approach we recall how to invert a non-degenerate constant Poisson tensor on $\boldsymbol{k}^n$. Assume that $ \Pi ^{ij} =\left\{x^{i} ,x^{j}\right\}\in\boldsymbol{k} $ is a non-degenerate Poisson tensor of the Poisson algebra $(P=\boldsymbol{k}[x^1,\dots,x^n],\{\ , \ \})$. Then
$ \left(\operatorname{d} x^{i}\right)^{\sharp } =\sum _{j} \Pi ^{ij} \partial _{j}$ defines the \emph{musical isomorphism} $\sharp:\Omega_{P|\boldsymbol{k}}\to \operatorname{Der}(P)$ from the $P$-module of Kähler differentials $\Omega_{P|\boldsymbol{k}}$ to the $P$-module vector fields $\operatorname{Der}(P)$.
Then the symplectic form $\omega=\sum_{i,j}\frac{1}{2}\omega_{ij}\operatorname{d}x^i\wedge\operatorname{d} x^j$ is defined by
$$\omega \left(\left(\operatorname{d} x^{i}\right)^{\sharp } ,\partial _{k}\right) =\sum _{j} \omega \left( \Pi ^{ij} \partial _{j} ,\partial _{k}\right) =\sum _{j} \Pi ^{ij} \omega _{jk} =\delta _{k}^{i} =\partial_k x^{i}.$$
This can be rewritten in a coordinate free way as follows
$ \omega \left(\left(\operatorname{d} a\right)^{\sharp } ,X\right) =\sum _{i,j,k}\frac{\partial a}{\partial x^{i}} \omega \left( \Pi ^{ij} \partial _{j} ,X^{k} \partial _{k}\right) =X( a)$ for $X\in\operatorname{Der}(P)$ and $a\in P$.

In order to discuss the singular case let us recall the following.
\begin{theorem}\cite[Lemma 2.1.2] {Simis}
    If $ I\subseteq P$ be an ideal in a polynomial $\boldsymbol{k}$-algebra $P$ then $ \operatorname{Der}( A) \simeq \operatorname{Der}_{I}( P) /I\operatorname{Der}( P)$, where
    $\operatorname{Der}_{I}( P)=\{X\in \operatorname{Der}(P)| X(I)\subseteq I\}$.
\end{theorem}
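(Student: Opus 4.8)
The plan is to realize the isomorphism as the descent of the tautological comparison map $\operatorname{Der}_I(P)\to\operatorname{Der}(A)$; the only point that requires a genuine argument is surjectivity, which exploits the freeness of the module of K\"ahler differentials over the polynomial ring $P$.

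Write $\pi\colon P\to A=P/I$ for the quotient map. First I would record the elementary fact that on the polynomial ring $P$ every $\boldsymbol{k}$-linear Leibniz map $D\colon P\to M$ with values in a $P$-module $M$ is determined by its values on the coordinates, namely $D(p)=\sum_i(\partial_i p)\cdot D(x^i)$, which follows by induction on the degree (equivalently, $\Omega_{P|\boldsymbol{k}}$ is free on $\mathrm{d}x^1,\dots,\mathrm{d}x^n$). In particular every $X\in\operatorname{Der}(P)$ has the form $X=\sum_i X(x^i)\,\partial_i$.

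Next I would define $\Phi\colon\operatorname{Der}_I(P)\to\operatorname{Der}(A)$ by $\Phi(X)(\pi(p))=\pi(X(p))$; the hypothesis $X(I)\subseteq I$ is exactly what makes this well defined, and it is routine that $\Phi(X)$ is a derivation of $A$ and that $\Phi$ is $P$-linear for the $P$-module structure on $\operatorname{Der}(A)$ induced by $\pi$. Since $I\operatorname{Der}(P)\subseteq\operatorname{Der}_I(P)$ and $I\cdot\operatorname{Der}_I(P)\subseteq I\operatorname{Der}(P)$, the quotient $\operatorname{Der}_I(P)/I\operatorname{Der}(P)$ is an $A$-module and $\Phi$ descends to an $A$-linear map $\overline{\Phi}$ from it to $\operatorname{Der}(A)$. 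To compute $\ker\Phi$, note that $\Phi(X)=0$ means $X(P)\subseteq I$; writing $X=\sum_i X(x^i)\,\partial_i$, this is equivalent to $X(x^i)\in I$ for all $i$, i.e.\ to $X\in I\operatorname{Der}(P)$. Hence $\ker\Phi=I\operatorname{Der}(P)$ and $\overline{\Phi}$ is injective.

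The main obstacle is surjectivity, and here is how I would handle it. Given $Y\in\operatorname{Der}(A)$, the composite $D:=Y\circ\pi\colon P\to A$ is a $\boldsymbol{k}$-derivation into the $P$-module $A$, so $D(p)=\sum_i\pi(\partial_i p)\,D(x^i)$. Pick lifts $\tilde b_i\in P$ of $D(x^i)\in A$ and set $X:=\sum_i\tilde b_i\,\partial_i\in\operatorname{Der}(P)$. The key observation is that for each generator $f_\mu$ of the (finitely generated) ideal $I$ one has $\pi(X(f_\mu))=\sum_i\pi(\tilde b_i)\,\pi(\partial_i f_\mu)=D(f_\mu)=Y(\pi(f_\mu))=Y(0)=0$, so $X(f_\mu)\in I$; applying the Leibniz rule to a general element $\sum_\mu h_\mu f_\mu$ of $I$ then gives $X(I)\subseteq I$, so that in fact $X\in\operatorname{Der}_I(P)$. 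Thus one is not free to prescribe the action of $X$ on $I$, yet the well-definedness of $Y$ on $A$ forces the constraint to hold automatically --- this is the heart of the statement. Finally $\Phi(X)(\pi(p))=\pi(X(p))=D(p)=Y(\pi(p))$ for all $p$, so $\Phi(X)=Y$, and the first isomorphism theorem yields $\operatorname{Der}_I(P)/I\operatorname{Der}(P)\simeq\operatorname{Der}(A)$ as $A$-modules.
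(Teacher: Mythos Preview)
Your proof is correct. The paper itself does not prove this theorem (it is cited from Simis), but its proof of the smooth analogue, Lemma~\ref{lem:diffder}, explicitly says it adapts the Simis argument, so one can compare against that. Your kernel computation is identical to the one there: evaluate on coordinates. For surjectivity you construct the lift directly by choosing preimages $\tilde b_i$ of $Y(\pi(x^i))$, whereas the paper's analogous argument dualizes the second fundamental exact sequence $\mathcal I/\overline{\mathcal I^2}\xrightarrow{d}(\mathcal C^\infty(\mathbb R^n)/\mathcal I)\hat\otimes\mathcal D_{\mathcal C^\infty(\mathbb R^n)|\mathbb R}\to\mathcal D_{(\mathcal C^\infty(\mathbb R^n)/\mathcal I)|\mathbb R}\to 0$ and identifies $\operatorname{Der}(A)$ with $\ker(d^\vee)\subseteq\bigoplus_i A\,\partial/\partial x^i$. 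These are the same idea in different packaging---both exploit the freeness of $\Omega_{P|\boldsymbol{k}}$ (equivalently of $\operatorname{Der}(P)$), and the condition of lying in $\ker(d^\vee)$ is exactly $X(I)\subseteq I$. Your explicit-lift version is a bit more elementary and self-contained; the conormal-sequence version is more functorial and is what the paper needs in the Fr\'echet setting. One cosmetic remark: your detour through generators $f_\mu$ and the Leibniz rule is unnecessary, since the same computation $\pi(X(f))=D(f)=Y(0)=0$ works for every $f\in I$ directly.
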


With this isomorphism understood we define a version of the musical map in the singular case as follows. It is given as the $A$-linear map
$$ \sharp :\Omega _{A|\boldsymbol{k}}\rightarrow \operatorname{Der}( A), \ \ \left(\operatorname{d}( a+I)\right)^{\sharp } =\{a,\ \} +I\operatorname{Der}_{I}( P),$$
where $\Omega _{A|\boldsymbol{k}}$ denotes the $A$-module of Kähler differentials and $a\in P$. Its image is denoted by $(\Omega _{A|\boldsymbol{k}})^\sharp$. As we will see in the next section $(\Omega _{A|\boldsymbol{k}})^\sharp$ may be different from $\operatorname{Der}( A)$. We denote by $ \iota ^{\operatorname{Ham}} :(\Omega _{A|\boldsymbol{k}})^\sharp\rightarrow \operatorname{Der}( A)$ the inclusion.

\begin{lemma}
 The map $\sharp$ is a morphism of Lie-Rinehart algebras and, accordingly, its image $(\Omega _{A|\boldsymbol{k}})^\sharp$ a Lie-Rinehart subalgebra of $(\operatorname{Der}(A),A)$.
\end{lemma}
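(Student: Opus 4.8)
The plan is to first equip the source $\Omega_{A|\boldsymbol{k}}$ with its natural Lie-Rinehart structure over $A$ coming from the Poisson bracket, then check that $\sharp$ satisfies the defining properties of a morphism of Lie-Rinehart algebras, and finally invoke the general fact that the image of such a morphism is a Lie-Rinehart subalgebra. Concretely, I would put on $\Omega_{A|\boldsymbol{k}}$ the \emph{Koszul bracket}, the $A$-Lie bracket with anchor $\sharp$ determined by $[\operatorname{d}a,\operatorname{d}b]=\operatorname{d}\{a,b\}$ together with the Leibniz rule $[\alpha,a\beta]=a[\alpha,\beta]+\alpha^{\sharp}(a)\,\beta$. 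That this is well defined on $\Omega_{A|\boldsymbol{k}}$ --- the quotient of $\Omega_{P|\boldsymbol{k}}\otimes_{P}A$ by the submodule generated by the classes of $\operatorname{d}f_{1},\dotsc,\operatorname{d}f_{\ell}$ --- is where the hypothesis that $I$ is a Poisson ideal enters, in the guise of the Poissoffel symbols $Z^{i\nu}_{\mu}$: on the free module $\Omega_{P|\boldsymbol{k}}=\bigoplus_{i}P\,\operatorname{d}x^{i}$ the Jacobi identity for the Koszul bracket follows directly from the Jacobi identity for $\{\ ,\ \}$ on $P$, and one then checks that the bracket descends through the relations defining $\Omega_{A|\boldsymbol{k}}$. (Alternatively one may simply cite the standard construction of the cotangent Lie-Rinehart algebra of a Poisson algebra.)

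With this structure fixed, I would verify the three conditions for $\sharp$ to be a Lie-Rinehart morphism. First, $A$-linearity of $\sharp$ is built into its construction: $a\mapsto\{a,\ \}$ is a $\boldsymbol{k}$-linear derivation of $A$ into the $A$-module $\operatorname{Der}(A)$ --- well defined because $I$ being a Poisson ideal forces $\{a,\ \}\in I\operatorname{Der}(P)$ whenever $a\in I$, so the class does not depend on the chosen lift --- and it factors through $\operatorname{d}\colon A\to\Omega_{A|\boldsymbol{k}}$ by the universal property of Kähler differentials. Second, compatibility of the anchors is automatic: the action of $\operatorname{Der}(A)$ on $A$ is evaluation and the action of $\Omega_{A|\boldsymbol{k}}$ on $A$ is by definition $\sharp$ followed by evaluation, so there is nothing beyond $(\operatorname{d}a)^{\sharp}(b)=\{a,b\}$ to say. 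Third, $\sharp$ intertwines the brackets: since the commutator on $\operatorname{Der}(A)$ and the Koszul bracket obey the same $A$-Leibniz rule relative to their anchors, and $\sharp$ is $A$-linear and anchor-preserving, the identity $([\alpha,\beta])^{\sharp}=[\alpha^{\sharp},\beta^{\sharp}]$ need only be verified on exact generators, where it reads $(\operatorname{d}\{a,b\})^{\sharp}=\{\{a,b\},\ \}=[\{a,\ \},\{b,\ \}]=[(\operatorname{d}a)^{\sharp},(\operatorname{d}b)^{\sharp}]$, the middle equality being precisely the Jacobi identity for the Poisson bracket of $A$; propagating this through the two Leibniz rules (a short bookkeeping computation on elements of the form $a\,\operatorname{d}b$) yields the general case.

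For the statement about the image one argues formally: since $\sharp$ is $A$-linear, $(\Omega_{A|\boldsymbol{k}})^{\sharp}$ is an $A$-submodule of $\operatorname{Der}(A)$, and since $\sharp$ is a Lie-algebra morphism it is closed under the commutator, $[\alpha^{\sharp},\beta^{\sharp}]=([\alpha,\beta])^{\sharp}\in(\Omega_{A|\boldsymbol{k}})^{\sharp}$; the four Lie-Rinehart axioms for $\bigl((\Omega_{A|\boldsymbol{k}})^{\sharp},A\bigr)$ are then inherited verbatim from $(\operatorname{Der}(A),A)$, so in particular $\iota^{\operatorname{Ham}}$ is an injective morphism of Lie-Rinehart algebras. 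The only step requiring genuine work is the well-definedness of the Koszul bracket on $\Omega_{A|\boldsymbol{k}}$ in the singular case --- everything else is either formal or reduces to the Jacobi identity --- and if one is willing to take the cotangent Lie-Rinehart structure on $\Omega_{A|\boldsymbol{k}}$ as known, the proof collapses to the one-line computation on exact generators together with the ``image of a morphism'' observation.
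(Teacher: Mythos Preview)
Your proposal is correct and follows essentially the same route as the paper: equip $\Omega_{A|\boldsymbol{k}}$ with the Koszul bracket (the paper cites \cite{Huebsch} for this), then verify that $\sharp$ intertwines the Koszul bracket with the commutator of derivations. The only difference is presentational: the paper writes out the Koszul bracket and the commutator directly on general elements $(a_{1}+I)\operatorname{d}(a_{2}+I)$ and $(b_{1}+I)\operatorname{d}(b_{2}+I)$ and lets the reader match terms, whereas you reduce to exact generators via the Leibniz rule and invoke Jacobi there; your version is more explicit about well-definedness and about why the image is a Lie-Rinehart subalgebra, points the paper leaves implicit.
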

\begin{proof}
    Consider Kähler forms $(a_1+I)\operatorname{d}(a_2+I)$ and $(b_1+I)\operatorname{d}(b_2+I)$ for $a_1,a_2,b_1,b_2\in P$.
    Recall \cite{Huebsch} that $(\Omega _{A|\boldsymbol{k}},A)$ forms a Lie-Rinehart algebra whose bracket is the so-called Koszul bracket:
    \begin{align*}
&[(a_{1} +I)\operatorname{d} (a_{2} +I),(b_{1} +I)\operatorname{d} (b_{2} +I)]\\
&=(a_{1} b_{1} +I)\operatorname{d} (\{a_{2} ,b_{2}\} +I)+(a_{1}\{a_{2} ,b_{1}\} +I)\operatorname{d} (b_{2} +I)-(b_{1}\{b_{2} ,a_{1}\} +I)\operatorname{d} (a_{2} +I).
    \end{align*}
    On the other hand we have the commutator
    \begin{align*}
        [ a_{1}\{a_{2} ,\ \} +I,b_{1}\{b_{2} ,\ \} +I] =a_{1} b_{1}\{\{a_{2} ,b_{2}\} ,\ \} +a_{1}\{a_{2} ,b_{1}\}\{b_{2} ,\ \} -b_{1}\{b_{2} ,a_{1}\}\{a_{2} ,\ \} +I.
    \end{align*}
\end{proof}

\begin{definition}\label{def:DHam}
We define the $A$-bilinear map $ \omega ^{\operatorname{Ham}} :(\Omega _{A|\boldsymbol{k}})^\sharp \times\operatorname{Der}( A)\to A$ by
\begin{align*}
\omega ^{\operatorname{Ham}}\left(\left(\operatorname{d} a+I\right)^{\sharp } ,X+I\operatorname{Der}_{I}( P)\right) :=X( a) +I.
\end{align*}
\end{definition}

\begin{proposition} $\ $
\begin{enumerate}
\item The form $ \omega ^{\operatorname{Ham}}$ in Definition \ref{def:DHam} does not depend on the choice of the representatives $a\in P$, $X\in \operatorname{Der}_I(P)$ and is hence well-defined.
\item With $ \delta ^{\operatorname{Ham}} :\operatorname{Alt}^{n}_A\left( (\Omega _{A|\boldsymbol{k}})^\sharp ,A\right)\rightarrow \operatorname{Alt}^{n+1}_A\left( (\Omega _{A|\boldsymbol{k}})^\sharp ,A\right)$ the naive de Rham differential of the Lie-Rinehart algebra $\left( (\Omega _{A|\boldsymbol{k}})^\sharp ,A\right)$
the restriction of $ \omega ^{\operatorname{Ham}}$ to $ \operatorname{Alt}^{2}_A\left( (\Omega _{A|\boldsymbol{k}})^\sharp ,A\right)$ fulfills $ \delta ^{\operatorname{Ham}} \omega^{\operatorname{Ham}} =0$.
\end{enumerate}
\end{proposition}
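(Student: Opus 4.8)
The plan is to prove the two parts separately, treating well-definedness first since the cocycle identity depends on it.

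\emph{Part (1): well-definedness.} I would first show that $\omega^{\operatorname{Ham}}$ does not depend on the representative $X\in\operatorname{Der}_I(P)$. If $X$ and $X'$ represent the same class in $\operatorname{Der}(A)\simeq\operatorname{Der}_I(P)/I\operatorname{Der}(P)$, then $X-X'\in I\operatorname{Der}(P)$, so $(X-X')(a)\in I$ for every $a\in P$; hence $X(a)+I=X'(a)+I$. Next I would handle independence of $a\in P$. Here the subtlety is that the input is the element $(\operatorname{d}(a+I))^\sharp=\{a,\ \}+I\operatorname{Der}_I(P)$ of $(\Omega_{A|\boldsymbol{k}})^\sharp$, so I must show that if $\{a,\ \}\equiv\{a',\ \}\bmod I\operatorname{Der}_I(P)$ (as derivations of $A$), then $X(a)+I=X(a')+I$ for every $X\in\operatorname{Der}_I(P)$. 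Writing $b=a-a'$, the hypothesis says $\{b,\ \}+I\operatorname{Der}(P)\in I\operatorname{Der}_I(P)$ inside $\operatorname{Der}(A)$, i.e. $\{b,\ \}=\sum_k c_k Y_k$ in $\operatorname{Der}(A)$ with $c_k\in I$ and $Y_k\in\operatorname{Der}_I(P)$. Applying this to $a$ and using $X(a)$ versus the symmetry $X(b)=\{b,\ \}$ evaluated suitably, the point is that $X(b)\equiv\{b,\ \}(\,\cdot\,)$... more carefully: I want to use the antisymmetry of the bracket. For $X=\{a'',\ \}$ Hamiltonian this is transparent — $X(b)=\{a'',b\}=-\{b,a''\}\in I$ because $\{b,\ \}$ represents $0$ in $\operatorname{Der}(A)$ means $\{b,P\}\subseteq I$... but a general $X\in\operatorname{Der}_I(P)$ need not be Hamiltonian. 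So the honest statement I need is: $(\operatorname{d}(b+I))^\sharp=0$ in $(\Omega_{A|\boldsymbol{k}})^\sharp$ forces $X(b)\in I$ for all $X\in\operatorname{Der}_I(P)$. I expect this to follow from the Leibniz relation $X(bc)=X(b)c+bX(c)$ together with the fact that $\{b,\ \}=0$ in $\operatorname{Der}(A)$ already gives $\{b,c\}\in I$ for all $c$; combined with a choice of $X$-compatible generators one reduces to checking on $b$ modulo $I$. I anticipate this is the genuinely delicate point and may require the hypothesis that the Poisson structure on $P$ is symplectic (so that $\{b,\ \}=0$ in $P$ forces $b$ constant), applied after passing to a generic point or using nondegeneracy of $\Pi$.

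\emph{Part (2): the cocycle identity.} I would compute $\delta^{\operatorname{Ham}}\omega^{\operatorname{Ham}}$ directly from the Koszul formula on three arguments $\xi_0,\xi_1,\xi_2\in(\Omega_{A|\boldsymbol{k}})^\sharp$, which by definition of $\sharp$ I may write as $\xi_m=(\operatorname{d}(a_m+I))^\sharp=\{a_m,\ \}+I\operatorname{Der}_I(P)$ for suitable $a_m\in P$ (extending bilinearly from elements of this form, since they $A$-span the module). Unwinding Definition \ref{def:DHam}, $\omega^{\operatorname{Ham}}(\xi_i,\xi_j)=\{a_i,a_j\}+I$, so the first sum of the Koszul formula contributes $\sum_i(-1)^i\{a_i,\ \}\bigl(\{a_j,a_k\}\bigr)$ over the complementary pair — i.e. a signed sum of double brackets $\{a_0,\{a_1,a_2\}\}$ and cyclic permutations. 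The second sum involves the Koszul bracket $[\xi_i,\xi_j]$; using the Lemma, $\sharp$ is a Lie-Rinehart morphism, so $[\xi_i,\xi_j]=[\{a_i,\ \},\{a_j,\ \}]=\{\{a_i,a_j\},\ \}$ modulo $I\operatorname{Der}_I(P)$, and hence $\omega^{\operatorname{Ham}}([\xi_i,\xi_j],\xi_k)=\{\{a_i,a_j\},a_k\}+I$. Collecting everything, $\delta^{\operatorname{Ham}}\omega^{\operatorname{Ham}}(\xi_0,\xi_1,\xi_2)$ equals a signed combination of iterated brackets that is precisely the Jacobi identity for $\{\ ,\ \}$ applied to $a_0,a_1,a_2$, hence vanishes in $P$, a fortiori in $A$. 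I would also note that the bilinearity extension is legitimate because both $\delta^{\operatorname{Ham}}\omega^{\operatorname{Ham}}$ and the target are $A$-multilinear and alternating, and because the Leibniz terms generated when replacing $\xi_m$ by $c\,\xi_m$ cancel by the standard computation showing $\operatorname{d}$ is well-defined on $\operatorname{Alt}_A^\bullet$.

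The main obstacle is the representative-independence in $a$ for Part (1): the map $\sharp$ can have a nontrivial kernel, and controlling which $b\in P$ satisfy $(\operatorname{d}(b+I))^\sharp=0$ — equivalently which $b$ have $\{b,\ \}\in I\operatorname{Der}_I(P)$ as a derivation of $A$ — requires understanding the interaction of the Poissoffel symbols with the module $\operatorname{Der}_I(P)$. Part (2) is then essentially a bookkeeping exercise reducing the Koszul differential of $\omega^{\operatorname{Ham}}$ to the Jacobi identity, exactly as in the classical fact that a Poisson bivector is $\operatorname{d}_{\mathrm{CE}}$-closed.
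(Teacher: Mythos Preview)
Your Part~(2) is exactly the paper's argument: take three inputs of the form $\{a,\ \},\{b,\ \},\{c,\ \}$, expand the Koszul formula, and watch the six iterated brackets collapse to (twice) the Jacobi identity for $\{\ ,\ \}$. The paper does not spell out the $A$-multilinear extension either; since $\delta^{\operatorname{Ham}}\omega^{\operatorname{Ham}}$ is by construction an element of $\operatorname{Alt}^3_A$, checking it on $A$-module generators suffices, so your closing remark about Leibniz cancellations is correct but not needed.

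For Part~(1) the paper writes only ``As (1) is clear we address (2)''. Your treatment is more careful than the paper's, and you have correctly isolated the nontrivial point. The independence of the lift $X\in\operatorname{Der}_I(P)$ and of the lift $a\in P$ of a fixed class $a+I\in A$ are both immediate (the latter because $X(I)\subseteq I$). What is \emph{not} immediate is that $X(a)+I$ depends only on the image $(\operatorname{d}(a+I))^\sharp\in(\Omega_{A|\boldsymbol{k}})^\sharp$ rather than on a preimage in $\Omega_{A|\boldsymbol{k}}$, and you are right that this is where the standing hypothesis that $(P,\{\ ,\ \})$ is \emph{symplectic} enters. Your argument can be closed cleanly without the detours through antisymmetry or Poissoffel symbols: if $\alpha=\sum_i c_i\,\operatorname{d}a_i\in\ker\sharp$, then for every $j$ one has $\sum_i c_i\{a_i,x^j\}=\sum_{i,k}c_i\,\Pi^{kj}\,\partial_k a_i\in I$; since $\Pi$ is invertible over $P$ this forces $\sum_i c_i\,\partial_k a_i\in I$ for every $k$, and then for any $X=\sum_k X^k\partial_k\in\operatorname{Der}_I(P)$ one gets $\sum_i c_i\,X(a_i)=\sum_k X^k\bigl(\sum_i c_i\,\partial_k a_i\bigr)\in I$. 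So your instinct was correct and the obstacle you flagged is not an obstacle once nondegeneracy of $\Pi$ is invoked directly.
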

\begin{proof}
As (1) is clear we address (2).
Consider $a, b, c \in P$ with $X= \{a,\ \} +I\operatorname{Der}_{I}( P)$, $Y=\{b,\ \} +I\operatorname{Der}_{I}( P)$, $Z=\{c,\ \} +I\operatorname{Der}_{I}( P)$
\begin{eqnarray*}
\delta ^{\operatorname{Ham}} \omega^{\operatorname{Ham}} (X,Y,Z) &=& X(\omega ( Y,Z)) -Y(\omega ( X,Z)) +Z(\omega ( X,Y))-\omega ([ X,Y] ,Z) +\omega ([ X,Z] ,Y) -\omega ([ Y,Z] ,X), \\
&=& \{ a, \{c , b\} \} - \{ b, \{c , a\} \} + \{ c, \{b , a\} \} -\{ a, \{c , b\} \} + \{ b, \{a , c \} -\{ a, \{b , c\} \} \}+I \in I
\end{eqnarray*}
by Jacobi's identity.
\end{proof}

We are now in the position to formulate the \textbf{fundamental question} of our approach. Does there exist a non-degenerate $ \omega \in \operatorname{Alt}^{2}_A\left(\operatorname{Der}( A) ,A\right)$ such that
\begin{enumerate}
    \item $ \operatorname{d}_{\operatorname{dR}} \omega =0$ and
\item  $ \omega \left( \iota ^{\operatorname{Ham}} \otimes \operatorname{id}\right) =\omega ^{\operatorname{Ham}}$\ ?
\end{enumerate}
In this case we say that $\omega$ is a \emph{symplectic form} on $\operatorname{Spec}(A)$. We will see in the next section that the answer can be affirmative. A priori, such a symplectic form does not have to be unique. The question of unicity will not be addressed in this paper.

\section{The double cone}\label{sec:dc}
Let us check if the program laid out in the Section \ref{sec:main} makes sense for the double cone
$$A=\boldsymbol{k}\left[ x^{1} ,x^{2} ,x^{3}\right] /\left( x^{1} x^{2} -x^{3} x^{3}\right) =:P/( f).$$
\begin{figure}
    \centering
    \includegraphics[width=0.4\textwidth]{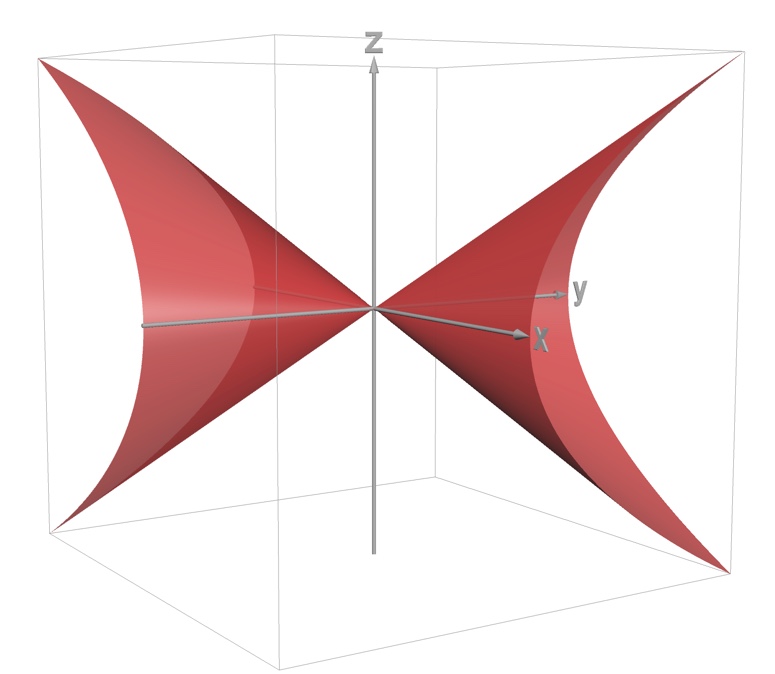}
    \caption{The double cone.}
    \label{fig:dc}
\end{figure}




We view $ A=\boldsymbol{k}\left[ x^{1} ,x^{2} ,x^{3}\right] /\left( x^{1} x^{2} -x^{3} x^{3}\right)$ as the Poisson algebra of invariants of the linear cotangent lifted $ \mathbb{Z}_{2}=\operatorname{O}_2(\boldsymbol{k})$-action on $ \boldsymbol{k}^{2}=T^*\boldsymbol{k}$ (see \cite{higherKoszul}).
Here the coordinates $x^{1} ,x^{2}$ and $x^{3}$ correspond to the $\Z_2$-invariants $qq,pp$ and $qp$ understood with canonical bracket $\{q,p\}=1$.
The variety $\operatorname{Spec}(A)$ has an isolated symplectic singularity at the origin (see \cite{HSScomp}).

\begin{proposition}\label{prop:DerGen}
Let $I=(f_{1} ,\dotsc ,f_{k} )\subset P$ be a ideal and $A=P/I$. Consider the intersection $J_{r} :=\operatorname{Jac}_{f_{r}} \cap I$ of the Jacobian ideal $\operatorname{Jac}_{f_r} =\left(\frac{\partial f_r}{\partial x^{1}} ,\dotsc ,\frac{\partial f_r}{\partial x^{n}}\right) \subseteq P$, $r=1,\dots,k$, with $I$ and the matrix
\begin{align}\label{eq:M}
\begin{bmatrix}
f_{1} & f_{2} & \cdots  & f_{k} & \frac{\partial f_{1}}{\partial x^{1}} & \cdots  & \frac{\partial f_{1}}{\partial x^{n}}\\
f_{1} & f_{2} & \cdots  & f_{k} & \frac{\partial f_{2}}{\partial x^{1}} & \cdots  & \frac{\partial f_{2}}{\partial x^{n}}\\
\cdots  &  &  &  &  &  & \cdots \\
f_{1} & f_{2} & \cdots  & f_{k} & \frac{\partial f_{k}}{\partial x^{1}} & \cdots  & \frac{\partial f_{k}}{\partial x^{n}}
\end{bmatrix} \in P^{k\times ( n+k)}
\end{align}
seen as a $P$-linear morphism $M:P^{n+k}\rightarrow P^{k}$. Then every $X\in \operatorname{Der}_{I}( P)$ can be written as $X=\sum _{j=1}^{n} g^{k+j} \partial /\partial x^{j}$ for some
$$\begin{bmatrix}
g^{1} & g^{2} & \cdots  & g^{k} & g^{k+1} & g^{k+2} & \cdots  & g^{k+n}
\end{bmatrix}^{\top } \in \ker M.$$
In particular, the $P$-module $\operatorname{Ann}_{\boldsymbol{f}}( P) :=\left\{X\in \operatorname{Der}( P) \ |\ \forall r\in\{1,\dots,k\}:\ X( f_r) =0\ \right\}$ is $P$-linearly isomorphic to the intersection $\cap _{r=1}^{k}\operatorname{Syz}\left(\operatorname{Jac}_{f_{r}}\right)$ of the $P$-modules of syzygies $\operatorname{Syz}\left(\operatorname{Jac}_{f_{r}}\right)$ of
the Jacobian ideals $\operatorname{Jac}_{f_{r}}$. Any $f\in \cap _{r=1}^{k} J_{r}$ can be written as $f=-\sum _{j=1}^{k} g^{j} f_{j} =\sum _{j=1}^{n} g^{k+j} \partial f_{r} /\partial x^{j}$ for every $r=1,\dots,k$.
\end{proposition}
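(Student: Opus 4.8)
All three assertions rest on one elementary observation: for a polynomial vector field $X=\sum_{j=1}^{n} h^{j}\,\partial/\partial x^{j}$ one has $X\in\operatorname{Der}_{I}(P)$ if and only if $X(f_{r})\in I$ for every $r$ --- since $X$ is a derivation and $I=(f_{1},\dots,f_{k})$ --- and by the definition of $X$ one has $X(f_{r})=\sum_{j=1}^{n} h^{j}\,\partial f_{r}/\partial x^{j}$. So the plan is to: (i) rewrite ``$X(f_{r})\in I$ for all $r$'' as the vanishing of a single coefficient vector against every row of $M$, identifying $\operatorname{Der}_{I}(P)$ with the projection of $\ker M$ onto its last $n$ coordinates; (ii) put the coefficients of the $f_{s}$ equal to zero to extract $\operatorname{Ann}_{\boldsymbol{f}}(P)$; and (iii) read the final claim off the same relations.

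For (i), the easy direction is that any $(g^{1},\dots,g^{k+n})^{\top}\in\ker M$ gives, by reading off the $r$-th row, $\sum_{j} g^{k+j}\,\partial f_{r}/\partial x^{j}=-\sum_{s} g^{s} f_{s}\in I$ for every $r$, so $X:=\sum_{j} g^{k+j}\,\partial/\partial x^{j}$ sends each $f_{r}$ into $I$ and hence lies in $\operatorname{Der}_{I}(P)$; thus ``keep the last $n$ coordinates'' is a well-defined $P$-linear map $\ker M\to\operatorname{Der}_{I}(P)$. The substance of (i) is the reverse: given $X=\sum_{j} h^{j}\,\partial/\partial x^{j}\in\operatorname{Der}_{I}(P)$, each $X(f_{r})=\sum_{j} h^{j}\,\partial f_{r}/\partial x^{j}$ lies in $I$ and hence equals $-\sum_{s} g^{s}_{(r)} f_{s}$ for suitable $g^{s}_{(r)}\in P$; one must arrange a single tuple $(g^{1},\dots,g^{k})$ that serves for all $r$ simultaneously, after which $(g^{1},\dots,g^{k},h^{1},\dots,h^{n})^{\top}\in\ker M$ exhibits $X$ in the required form.

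For (ii), $X\in\operatorname{Ann}_{\boldsymbol{f}}(P)$ says exactly $\sum_{j} h^{j}\,\partial f_{r}/\partial x^{j}=0$ for every $r$, i.e. $(h^{1},\dots,h^{n})^{\top}\in\operatorname{Syz}(\operatorname{Jac}_{f_{r}})$ for each $r$; intersecting over $r$ gives the $P$-linear isomorphism $\operatorname{Ann}_{\boldsymbol{f}}(P)\cong\bigcap_{r=1}^{k}\operatorname{Syz}(\operatorname{Jac}_{f_{r}})$, $X\mapsto(h^{1},\dots,h^{n})^{\top}$. For (iii), let $f\in\bigcap_{r=1}^{k} J_{r}$: membership $f\in I$ gives $f=\sum_{s} a^{s} f_{s}$, and membership $f\in\operatorname{Jac}_{f_{r}}$ gives $f=\sum_{j} b^{j}\,\partial f_{r}/\partial x^{j}$; setting $g^{s}:=-a^{s}$ and $g^{k+j}:=b^{j}$, the chain $\sum_{j} g^{k+j}\,\partial f_{r}/\partial x^{j}=f=-\sum_{s} g^{s} f_{s}$, valid for all $r$, says precisely that $(g^{1},\dots,g^{k+n})^{\top}\in\ker M$, which is the displayed formula; conversely, for $(g^{\bullet})\in\ker M$ the element $-\sum_{s} g^{s} f_{s}$ lies in $\bigcap_{r} J_{r}$ by the computation in (i).

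The delicate point --- and the step I expect to demand the most care --- is the simultaneity invoked in (i) and (iii): a priori the polynomials witnessing $X(f_{r})\in I$, or $f\in\operatorname{Jac}_{f_{r}}$, depend on $r$, and one has to show that a single coefficient vector can be chosen to annihilate all rows of $M$ at once; this is what the repeated block of $f_{s}$'s in \eqref{eq:M} is arranged to capture, and establishing it amounts to comparing the syzygies of the enlarged matrix with the separate membership conditions. A further point worth flagging is that \eqref{eq:M} computes $\operatorname{Der}_{I}(P)$, whereas the geometrically relevant object is $\operatorname{Der}(A)\cong\operatorname{Der}_{I}(P)/I\operatorname{Der}(P)$; so in practice one obtains $\ker M$ as a syzygy module via a Gröbner basis and then reduces modulo $I\operatorname{Der}(P)$. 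Since syzygies of an ideal and intersections of submodules of a free module are themselves Gröbner basis computations, the resulting presentation of $\operatorname{Der}(A)$ is fully effective --- which is precisely what the analysis of the double cone below requires.
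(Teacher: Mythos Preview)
The paper's own proof is the single line ``It is straight forward to verify the statements,'' so your unpacking of the definitions is exactly the intended approach, just written out. For the hypersurface case $k=1$ --- the only case the paper actually uses (the double cone has $I=(f)$) --- there is a single row, the simultaneity issue does not arise, and your argument for all three parts is complete.

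However, the ``delicate point'' you flag for $k\ge 2$ is not merely delicate: it is an actual obstruction, and the proposition as written is false in that generality. Because the block $(f_1,\dots,f_k)$ is repeated identically in every row of $M$, a vector in $\ker M$ forces $X(f_r)=-\sum_s g^s f_s$ with the \emph{same} right-hand side for every $r$, hence $X(f_1)=\cdots=X(f_k)$. Concretely, take $P=\boldsymbol{k}[x,y]$, $I=(x,y)$: then $x\,\partial/\partial x\in\operatorname{Der}_I(P)$, but with $M=\left[\begin{smallmatrix}x&y&1&0\\ x&y&0&1\end{smallmatrix}\right]$ every element of $\ker M$ has $g^3=g^4$, so the only derivations produced are multiples of $\partial/\partial x+\partial/\partial y$. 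The same defect hits (iii): your assignment $g^{k+j}:=b^j$ is ill-defined since the $b^j$ depend on $r$; for instance with $P=\boldsymbol{k}[x]$, $f_1=x$, $f_2=x^2$, the element $f=x\in J_1\cap J_2$ would require $g^3=x$ from row $1$ and $g^3=1/2$ from row $2$. The fix is to allow independent $I$-coefficients in each row --- a matrix $P^{k^2+n}\to P^k$ with block-diagonal $f$-part --- or, equivalently, to describe $\operatorname{Der}_I(P)$ as the preimage of $I\cdot P^k$ under the Jacobian map $P^n\to P^k$. So your instinct was right; rather than trying to close the gap, you should restrict the statement to $k=1$ or correct the matrix.
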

\begin{proof}
It is straight forward to verify the statements.
\end{proof}

The table of Poisson brackets is
$$ \begin{array}{ c||c| c |c }
\{\ ,\  \} & x^{1} & x^{2} & x^{3}\\
\hline\hline
x^{1} & 0 & 4x^{3} & 2x^{1}\\\hline
x^{2} & -4x^3 & 0 & -2x^{2}\\\hline
x^{3} & -2x^1 & 2x^2 & 0
\end{array}$$
and the generators of $ (\Omega _{A|\boldsymbol{k}})^\sharp$ as an $A$-module are given by the $I\operatorname{Der}(P)$ classes of
\begin{align}\label{eq:genDHam}
\left\{x^{1} ,\ \right\} =4x^{3}\frac{\partial }{\partial x^{2}} +2x^{1}\frac{\partial }{\partial x^{3}} ,\ \left\{x^{2} ,\ \right\} =-4x^{3}\frac{\partial }{\partial x^{1}} -2x^{2}\frac{\partial }{\partial x^{3}},\ \left\{x^{3} ,\ \right\} =2x^{2}\frac{\partial }{\partial x^{2}} -2x^{1}\frac{\partial }{\partial x^{1}}.
\end{align}
It turns out that the Poissoffel symbols vanish, i.e., the polynomial $ f=x^{1} x^{2} -x^{3}x^{3}$ is actually a Casimir. This means that $ \operatorname{Ann}_{f}(P) =(\Omega _{A|\boldsymbol{k}})^\sharp$.
We have $ \operatorname{Jac}_{f} \cap ( f) =( f)$ since $2f=2x^{1}\partial f/\partial x^{1} +x^{3}\partial f/\partial x^{3}$. So the list \eqref{eq:genDHam} has to be amended by
\begin{align}\label{eq:genEuler}
X_4:=2x^{1}\frac{\partial }{\partial x^{1}} +x^{3}\frac{\partial }{\partial x^{3}}
\end{align}
to get the generators of $ \operatorname{Der}_{I}( P)$ as a $ P$-module. Note that $ 2x^{2}\frac{\partial }{\partial x^{2}} +x^{3}\frac{\partial }{\partial x^{3}} =X_4 - \left\{x^{3} ,\ \right\}$.
According to Macaulay2, the minimal free resolution of the  $A$-module $\operatorname{Der}( A)=\operatorname{coker}\left[\begin{smallmatrix}
0 & -4x^{3} & -2x^{1} & 2x^{1}\\
4x^{3} & 0 & 2x^{2} & 0\\
2x^{1} & -2x^{2} & 0 & x^{3}
\end{smallmatrix}\right]$ starts with
\begin{align}\label{eq:res}
A^{4}\xleftarrow{\left[
\begin{smallmatrix}
x^{2} & 0 & 0 & -x^{3}\\
0 & x^{1} & x^{3} & 0\\
-2x^{3} & 0 & 0 & 2x^{1}\\
-2x^{3} & 2x^{3} & 2x^{2} & 2x^{1}
\end{smallmatrix}\right]} A^4\xleftarrow{\left[
\begin{smallmatrix}
x^{1} & 0 & 0 & x^{3}\\
0 & x^{2} & -x^{3} & 0\\
0 & -x^{3} & x^{1} & 0\\
x^{3} & 0 & 0 & x^{2}
\end{smallmatrix}\right]} A^4\xleftarrow{\left[
\begin{smallmatrix}
x^{2} & 0 & 0 & -x^{3}\\
0 & x^{1} & x^{3} & 0\\
0 & x^{3} & x^{2} & 0\\
-x^{3} & 0 & 0 & x^{1}
\end{smallmatrix}\right]} A^4\leftarrow\dots .
\end{align}
The empirical fact that the differentials are linear seems to indicate that $\operatorname{Der}(A)$ is a Koszul $A$-module. The dimension of $\operatorname{Der}(A)$ is $2$.


Since $ \dim\left(\operatorname{coker}\left( \iota ^{\operatorname{Ham}}\right)\right) =1$ the form $ \omega \in \operatorname{Alt}^{2}_A\left(\operatorname{Der}( A) ,A\right)$ is already defined by Definition \ref{def:DHam}. It remains to check that $ \omega $ is non-degenerate and that $ \operatorname{d}_{\operatorname{dR}} \omega =0$.
To this end let us evaluate
\begin{align*}
\operatorname{d}_{\operatorname{dR}} \omega ( X,Y,X_4) &=X(\omega ( Y,X_4)) -Y(\omega ( X,X_4)) +X_4(\omega ( X,Y))-\omega ([ X,Y] ,X_4) +\omega ([ X,X_4] ,Y) -\omega ([ Y,X_4] ,X),
\end{align*}
where $ X,Y$ are distinct $ \left\{x^{i} ,\ \right\}$ with $ i=1,2,3$.
In fact, we have
\begin{align*}
&\left(\operatorname{d}_{\operatorname{dR}} \omega \right)\left(\left\{x^{1} ,\ \right\} ,\left\{x^{2} ,\ \right\} ,X_4\right)\\&=\left\{x^{1} ,\omega \left(\left\{x^{2} ,\ \right\} ,X_4\right)\right\} -\left\{x^{2} ,\omega \left(\left\{x^{1} ,\ \right\} ,X_4\right)\right\} +X_4\omega \left(\left\{x^{1} ,\ \right\} ,\left\{x^{2} ,\ \right\}\right)\\&\ \ \ -\omega \left(\left[\left\{x^{1} ,\ \right\} ,\left\{x^{2} ,\ \right\}\right] ,X_4\right) +\omega \left(\left[\left\{x^{1} ,\ \right\} ,X_4\right] ,\left\{x^{2} ,\ \right\}\right) -\omega \left(\left[\left\{x^{2} ,\ \right\} ,X_4\right] ,\left\{x^{1} ,\ \right\}\right)\\
&=\left\{x^{1} ,X_4\left( x^{2}\right)\right\} -\left\{x^{2} ,X_4\left( x^{1}\right)\right\} +X_4\left(\left\{x^{2} ,x^{1}\right\}\right)\\
&\ \ \ -\omega \left(\left\{\left\{x^{1} ,x^{2}\right\} ,\ \right\} ,X_4\right) -\omega \left( \{x^1,\ \} ,\left\{x^{2} ,\ \right\}\right)-\omega \left(\left\{x^{2},\ \right\} ,\left\{x^{1} ,\ \right\}\right)\\
&=0-2\left\{x^{2} ,x^{1}\right\} -X_4\left( 4x^{3}\right) -\omega \left(\left\{4x^{3} ,\ \right\} ,X_4\right) +0=8x^{3} -4x^{3} -4x^{3}=0.
\end{align*}
All expressions above are to be understood modulo $I\operatorname{Der}_I(P)$ and $I$, respectively. To unclutter the notation we did not annotate these expressions and continue with this habit later on.
We used the auxiliary evaluations:
\begin{align*}
    \left[\left\{x^{1} ,\ \right\} ,\left\{x^{2} ,\ \right\}\right] &=\left\{x^{1} ,\left\{x^{2} ,\ \right\}\right\} -\left\{x^{2} ,\left\{x^{1} ,\ \right\}\right\} =\left\{\left\{x^{1} ,x^{2}\right\} ,\ \right\},\ \ X_4\left( x^{2}\right) =0, \ \ X_4\left( x^{1}\right) =2x^{1},\\
\left[\left\{x^{1} ,\ \right\} ,X_4\right] &=\left[ 4x^{3}\frac{\partial }{\partial x^{2}} +2x^{1}\frac{\partial }{\partial x^{3}} ,2x^{1}\frac{\partial }{\partial x^{1}} +x^{3}\frac{\partial }{\partial x^{3}}\right] =-2x^{1}\frac{\partial }{\partial x^{3}} -4x^{3}\frac{\partial }{\partial x^{2}} =-\{x^1, \ \},\\
\left[\left\{x^{2} ,\ \right\} ,X_4\right]&=\left[ 4x^{3}\frac{\partial }{\partial x^{2}} +2x^{1}\frac{\partial }{\partial x^{3}} ,2x^{1}\frac{\partial }{\partial x^{1}} +x^{3}\frac{\partial }{\partial x^{3}}\right] =-4x^{3}\frac{\partial }{\partial x^{1}} -2x^{2}\frac{\partial }{\partial x^{3}}=\{x^2, \  \}
\end{align*}
Next we determine
\begin{align*}
&\left(\operatorname{d}_{\operatorname{dR}} \omega \right)\left(\left\{x^{1} ,\ \right\} ,\left\{x^{3} ,\ \right\} ,X_4\right)\\
&=\left\{x^{1} ,\omega \left(\left\{x^{3} ,\ \right\} ,X_4\right)\right\} -\left\{x^{3} ,\omega \left(\left\{x^{1} ,\ \right\} ,X_4\right)\right\} +X_4\omega \left(\left\{x^{1} ,\ \right\} ,\left\{x^{3} ,\ \right\}\right)\\
&\ \ \ \ \ \ \ \ \ \ \ \ \ \ \ \ \ \ \ \ \ \ -\omega \left(\left[\left\{x^{1} ,\ \right\} ,\left\{x^{3} ,\ \right\}\right] ,X_4\right) +\omega \left(\left[\left\{x^{1} ,\ \right\} ,X_4\right] ,\left\{x^{3} ,\ \right\}\right) -\omega \left(\left[\left\{x^{3} ,\ \right\} ,X_4\right] ,\left\{x^{1} ,\ \right\}\right)\\
&=\left\{x^{1} ,X_4\left( x^{3}\right)\right\} -\left\{x^{3} ,X_4\left( x^{1}\right)\right\} +X_4\left(\left\{x^{3} ,\ x^{1} \ \right\}\right)-\omega \left(\left\{\left\{x^{1} ,x^{3}\right\} ,\ \right\} ,X_4\right) +\omega \left( -\{x^1, \ \} ,\left\{x^{3} ,\ \right\}\right) -0\\
&=\left\{x^{1} ,x^{3}\right\} -2\left\{x^{3} ,x^{1}\right\} -X_4\left( 2x^{1}\right) -X_4\left( 2x^{1}\right) +\left\{x^{1} ,x^{3}\right\}=4\left\{x^{1} ,x^{3}\right\} -8x^{1}=0,
\end{align*}
where we have used
\begin{align*}
\left[\left\{x^{3} ,\ \right\} ,X_4\right] &=\left[ 2x^{2}\frac{\partial }{\partial x^{2}} -2x^{1}\frac{\partial }{\partial x^{1}} ,2x^{1}\frac{\partial }{\partial x^{1}} +x^{3}\frac{\partial }{\partial x^{3}}\right] =0,\ \  X_4\left( x^{3}\right) =x^{3} ,\\
\left[\left\{x^{1} ,\ \right\} ,\left\{x^{3} ,\ \right\}\right] &=\left\{x^{1} ,\left\{x^{3} ,\ \right\}\right\} -\left\{x^{3} ,\left\{x^{1} ,\ \right\}\right\} =\left\{\left\{x^{1} ,x^{3}\right\} ,\ \right\}.
\end{align*}
Finally, we calculate
\begin{align*}
&\left(\operatorname{d}_{\operatorname{dR}} \omega \right)\left(\left\{x^{2} ,\ \right\} ,\left\{x^{3} ,\ \right\} ,X_4\right)\\
&=\left\{x^{2} ,\omega \left(\left\{x^{3} ,\ \right\} ,X_4\right)\right\} -\left\{x^{3} ,\omega \left(\left\{x^{2} ,\ \right\} ,X_4\right)\right\} +X_4\omega \left(\left\{x^{2} ,\ \right\} ,\left\{x^{3} ,\ \right\}\right)\\
&\ \ \ \ \ \ \ \ \ \ \ \ \ \ \ \ \ \ \ \ \ \ -\omega \left(\left[\left\{x^{2} ,\ \right\} ,\left\{x^{3} ,\ \right\}\right] ,X_4\right) +\omega \left(\left[\left\{x^{2} ,\ \right\} ,X_4\right] ,\left\{x^{3} ,\ \right\}\right) -\omega \left(\left[\left\{x^{3} ,\ \right\} ,X_4\right] ,\left\{x^{2} ,\ \right\}\right)\\
&=\left\{x^{2} ,X_4\left( x^{3}\right)\right\} -\left\{x^{3} ,X_4\left( x^{2}\right)\right\} +X_4\left(\left\{x^{2} ,\ x^{3} \ \right\}\right)-\omega \left(\left\{\left\{x^{2} ,x^{3}\right\} ,\ \right\} ,X_4\right) +\omega \left(\left\{x^{2} ,\ \right\},\left\{x^{3} ,\ \right\}\right)\\
&=\left\{x^{2} ,x^{3}\right\} -0+0+\omega \left(\left\{2x^{2} ,\ \right\} ,X_4\right) + \left\{x^{3} ,x^{2}\right\}=0,
\end{align*}
proving that $ \operatorname{d}_{\operatorname{dR}} \omega =0$.

To check non-degeneracy we need to calculate the kernel of the matrix
\begin{align}
\label{eq:omega}
 \begin{array}{ c||c| c| c| c }
\omega ( \ ,\ ) & \left\{x^{1} ,\ \right\} & \left\{x^{2} ,\ \right\} & \left\{x^{3} ,\ \right\} & X_4\\
\hline\hline
\left\{x^{1} ,\ \right\} & 0 & -4x^{3} & -2x^{1} & 2x^{1}\\\hline
\left\{x^{2} ,\ \right\} & 4x^{3} & 0 & 2x^{2} & 0\\\hline
\left\{x^{3} ,\ \right\} & 2x^{1} & -2x^{2} & 0 & x^{3}\\\hline
X_4 & -2x^{1} & 0 & -x^{3} & 0
\end{array}
\end{align}
over $ A=P/( f)$. By inspection the columns of the matrix
\begin{align*}
    \left[
\begin{matrix}
x^{2} & 0 & 0 & -x^{3}\\
0 & x^{1} & x^{3} & 0\\
-2x^{3} & 0 & 0 & 2x^{1}\\
-2x^{3} & 2x^{3} & 2x^{2} & 2x^{1}
\end{matrix}\right],
\end{align*}
representing the differential of the resolution  \eqref{eq:res} in homological degree $1$, are also generators for $\ker(\omega)$.
Note that when removing the last row from the matrix \eqref{eq:omega} one the matrix presenting $\operatorname{Der}(A)$.
This means that the $2$-form $\omega$ is well-defined on $\operatorname{Der}(A)$ and non-degenerate.  We have no explanation for the fact that its determinant is $ ( 4f)^{2}$.

We have proven that $\omega$ is a symplectic form on the double cone.

\section{Explanation in terms of orbifold symplectic forms}\label{sec:expl}

The aim is here to interprete our symplectic form on the double cone as an instance of an orbifold symplectic form. We use results of Gerald Schwarz's article about lifting homotopies from orbit spaces \cite{LiftingHomo}.

Let $ \Gamma $ be a finite group acting linearly on the $ \boldsymbol{k}$-vector space $ V$. Consider the cotangent lifted action of $ \Gamma $ on $ T^{*} V=V\times V^{*}$ and denote by $ R$ the coordinate ring $ \boldsymbol{k}\left[ T^{*} V\right]$. Using a fundamental system $ u_{1} ,\dotsc ,u_{k} \in R^{\Gamma }$ of invariants we obtain an isomorphism of $ \boldsymbol{k}$-algebras $ R^{\Gamma } \simeq P/I$ where $ P=\boldsymbol{k}[ x_{1} ,\dotsc ,x_{n}]$ is a graded polynomial ring and the ideal $ I$ is given by the relations among the
$ u_{1} ,\dotsc ,u_{k}$. The group $ \Gamma $ acts on a differential form $ \omega \in \bigwedge{}_{R} \Omega _{R|\boldsymbol{k}}$ on the left by pullback $ \omega \mapsto \left( \gamma ^{-1}\right)^{*} \omega $, $ \gamma \in \Gamma $.
We write $ \left(\bigwedge{}_{R} \Omega _{R|\boldsymbol{k}}\right)^{\Gamma }$ for the $ R^{\Gamma }$-module of $\Gamma$-invariant differential forms. The group $ \Gamma $ acts also on derivations $ X\in \operatorname{Der}( R)$ by $ X\mapsto \left( \gamma ^{-1}\right)^{*} X\ \gamma ^{*}$. The $ R^{\Gamma }$-module of $\Gamma$-invariant derivations will be denoted by $ \operatorname{Der}( R)^{\Gamma }$. The space of $\Gamma$-invariant differential forms $ \left(\bigwedge{}_{R} \Omega _{R|\boldsymbol{k}}\right)^{\Gamma }$ can be interpreted as $ R^{\Gamma }$-valued alternating multilinear maps, i.e., we have a map
$$ \left(\bigwedge{}_{R} \Omega _{R|\boldsymbol{k}}\right)^{\Gamma }\rightarrow \operatorname{Alt}_{R^{\Gamma }}\left(\operatorname{Der}( R)^{\Gamma } ,R^{\Gamma }\right).$$
Since $ \left(\bigwedge{}_{R} \Omega _{R|\boldsymbol{k}}\right)^{\Gamma }$is the $ \boldsymbol{k}$-vector subspace of the free module $ \bigwedge{}_{R} \Omega _{R|\boldsymbol{k}} \simeq \operatorname{Alt}_{R}\left(\operatorname{Der}( R) ,R\right)$ this map is injective. The canonical symplectic form $ \omega _{0}$ on $ T^{*} V$ is actually in $ \left(\bigwedge{}_{R}^{2} \Omega _{R|\boldsymbol{k}}\right)^{\Gamma }$.

The invariant differential forms $ \left( C_{\operatorname{dR}}^{\Gamma }( R) :=\left(\bigwedge{}_{R} \Omega _{R|\boldsymbol{k}}\right)^{\Gamma } ,\operatorname{d_{dR}}\right)$ form a subcomplex, the \textit{invariant de Rham complex}, of the usual de Rham complex $ \left( C_{\operatorname{dR}}( R) :=\bigwedge{}_{R} \Omega _{R|\boldsymbol{k}} ,\operatorname{d_{dR}}\right)$ since $ \ \gamma ^{*}\operatorname{d_{dR}} =\operatorname{d_{dR}} \gamma ^{*}$ for each $ \gamma \in \Gamma $. Using the $ R^{\Gamma }$-linear embedding $ \left(\bigwedge{}_{R} \Omega _{R|\boldsymbol{k}}\right)^{\Gamma }\rightarrow \operatorname{Alt}_{R^{\Gamma }}\left(\operatorname{Der}( R)^{\Gamma } ,R^{\Gamma }\right)$ the differential on the right hand side can be expressed by the Koszul formula
\begin{align*}
\left(\operatorname{d}_{\operatorname{dR}} \omega \right)( Y_{0} ,Y_{1} ,\dotsc ,Y_{n}) &=\sum _{i=0}^{n}( -1)^{i} Y_{i}\left( \omega \left( Y_{0} ,\dotsc ,\widehat{Y_{i}} ,\dotsc ,Y_{n}\right)\right)\\
&\ \ \ \ \ \ \ \ \ \ \ \ \ \ \ \ \ \ \ \ \ \ \ \ \  +\sum _{0\leq i< j\leq n}( -1)^{i+j} \omega \left([ Y_{i} ,Y_{j}] ,Y_{0} ,\dotsc ,\widehat{Y_{i}} ,\dotsc ,\widehat{Y_{j}} ,\dotsc ,Y_{n}\right) ,
\end{align*}
where $ Y_{0} ,\dotsc ,Y_{n} \in \operatorname{Der}( R)^{\Gamma }$. We will show that when $ \boldsymbol{k} =\mathbb{C}$ the canonical $ R^{\Gamma }$-linear map $ \pi :\operatorname{Der}( R)^{\Gamma }\rightarrow \operatorname{Der}\left( R^{\Gamma }\right)$ is an isomorphism \ so that $ \omega _{0}$ can be interpreted as a symplectic form in the naive de Rham complex since, using $ R^{\Gamma } \simeq P/I$, $ \left( C_{\operatorname{dR}}^{\Gamma }( R) :=\left(\bigwedge \Omega _{R|\boldsymbol{k}}\right)^{\Gamma } ,\operatorname{d_{dR}}\right)$ can be seen as a subcomplex of the naive de Rham complex $ \left(\operatorname{Alt}_{P/I}\left(\operatorname{Der}( P/I) ,P/I\right) ,\operatorname{d}_{\operatorname{dR}}\right)$.

\begin{proposition}\label{prop:lift}
If $ \boldsymbol{k} =\mathbb{C}$ then the canonical $ R^{\Gamma }$-linear map $ \pi :\operatorname{Der}( R)^{\Gamma }\rightarrow \operatorname{Der}\left( R^{\Gamma }\right)$ is onto.
\end{proposition}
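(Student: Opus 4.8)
The plan is to lift a given derivation of $R^\Gamma$ through the fraction fields, using separability of $K/K^\Gamma$, and then to show — this is the real content — that the lift automatically preserves $R$, by combining normality of $R$ with a codimension estimate that is special to cotangent lifts. We may assume throughout that $\Gamma$ acts faithfully on $V$, since the kernel of the action acts trivially on $R$ and hence does not affect any of the objects in question. Since $T^*V$ is an affine space, $R=\C[T^*V]$ is a regular, hence normal, domain; $R^\Gamma$ is a normal domain, $R$ is finite over it, and $K:=\operatorname{Frac}(R)$ is a Galois extension of $K^\Gamma:=\operatorname{Frac}(R^\Gamma)=K^\Gamma$ with group $\Gamma$ (here $\operatorname{Frac}(R^\Gamma)=\operatorname{Frac}(R)^\Gamma$ by the usual averaging of denominators).

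First, given $D\in\operatorname{Der}(R^\Gamma)$, the quotient rule extends $D$ uniquely to a derivation of $K^\Gamma$, and since $K/K^\Gamma$ is finite and separable (characteristic zero) one has $\Omega_{K/K^\Gamma}=0$, so $D$ extends uniquely to a derivation $\widetilde D$ of $K$. Next, $\widetilde D$ is $\Gamma$-invariant: for $\gamma\in\Gamma$ the conjugate $(\gamma^{-1})^*\circ\widetilde D\circ\gamma^*$ is again a derivation of $K$ that restricts to $D$ on $K^\Gamma$ (which $\gamma$ fixes pointwise), so by the uniqueness just established it equals $\widetilde D$.

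The heart of the matter is to prove $\widetilde D(R)\subseteq R$. Since $R$ is normal, $R=\bigcap_{\operatorname{ht}\mathfrak p=1}R_{\mathfrak p}$, so it suffices to show $\widetilde D(R_{\mathfrak p})\subseteq R_{\mathfrak p}$ for every height-one prime $\mathfrak p\subset R$. Here one uses the cotangent-lift structure: for $\gamma\neq\operatorname{id}$ the fixed locus is $(T^*V)^\gamma=V^\gamma\oplus (V^*)^\gamma$, of dimension $2\dim V^\gamma\leq 2\dim V-2$, so the locus $\Sigma\subseteq T^*V$ of points with non-trivial stabilizer has codimension at least $2$. Over the complement of $\Sigma$ the quotient map $\Spec R\to\Spec R^\Gamma$ is a free quotient, hence étale, so $\operatorname{Supp}\Omega_{R/R^\Gamma}\subseteq\Sigma$ has codimension $\geq 2$ and therefore contains no height-one prime of $R$. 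Now fix a height-one prime $\mathfrak p$ and put $\mathfrak q=\mathfrak p\cap R^\Gamma$; by going-down for the integral extension of the integrally closed domain $R^\Gamma$, $\mathfrak q$ is again of height one, so $R^\Gamma_{\mathfrak q}$ is a DVR. The localization $R_{\mathfrak p}$ is a domain containing this DVR, hence torsion-free, hence flat over $R^\Gamma_{\mathfrak q}$, and it is unramified over $R^\Gamma_{\mathfrak q}$ because $(\Omega_{R/R^\Gamma})_{\mathfrak p}=0$; thus $R_{\mathfrak p}$ is étale over $R^\Gamma_{\mathfrak q}$ (equivalently, one may first do this for the semilocal ring $(R^\Gamma\setminus\mathfrak q)^{-1}R$). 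For an étale extension every derivation of the base lifts uniquely, so the localization of $D$ at $\mathfrak q$ extends to a derivation of $R_{\mathfrak p}$, which by uniqueness of extensions of $D$ to $K=\operatorname{Frac}(R_{\mathfrak p})$ must equal $\widetilde D$. Hence $\widetilde D(R_{\mathfrak p})\subseteq R_{\mathfrak p}$, and intersecting over all $\mathfrak p$ gives $\widetilde D(R)\subseteq R$. Therefore $\widetilde D\in\operatorname{Der}(R)^\Gamma$ and $\pi(\widetilde D)=\widetilde D|_{R^\Gamma}=D$, so $\pi$ is onto.

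The main obstacle is exactly the codimension bound in the previous paragraph: it is what forces every height-one prime of $R$ to be unramified over $R^\Gamma$, and it is genuinely special to cotangent-lifted actions. For an arbitrary linear action the conclusion can fail — already for $\Z_2$ acting by $\pm 1$ on $V=\C$ (where the fixed locus has codimension one and the orbit space is even smooth), the analogous map $\pi$ is not surjective. One should also verify the routine points that going-down applies and that the various local extensions are compatible; the latter is handled uniformly above by always appealing to the uniqueness of the extension of $D$ to $K$.
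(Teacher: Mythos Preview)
Your proof is correct, and it takes a genuinely different route from the paper. The paper's proof of this proposition is a one-line citation of Schwarz \cite[Corollary 7.7]{LiftingHomo}; no argument is given. You instead supply a self-contained commutative-algebra proof: extend the derivation to the fraction field using separability, then descend to $R$ via the normality criterion $R=\bigcap_{\operatorname{ht}\mathfrak p=1}R_{\mathfrak p}$, using that the branch locus of $\Spec R\to\Spec R^{\Gamma}$ has codimension $\geq 2$ so that the extension is (finite) \'etale at every height-one prime. The crucial geometric input --- that the cotangent-lifted action of a finite group has no pseudo-reflections, i.e.\ $(T^*V)^{\gamma}$ has codimension $\geq 2$ for $\gamma\neq\operatorname{id}$ --- is exactly what makes Schwarz's lifting result apply here (compare the role of the absence of codimension-one strata in the paper's Proposition \ref{prop:codim1}), but you make this explicit and build the argument around it rather than invoking the general machinery. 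Your approach is more elementary and more transparent for this particular situation, and your $\Z_2$-on-$\C$ counterexample cleanly isolates why the cotangent lift is essential; the paper's citation, on the other hand, points to a result that covers much broader settings (reductive groups, smooth actions). One cosmetic remark: your sentence ``thus $R_{\mathfrak p}$ is \'etale over $R^{\Gamma}_{\mathfrak q}$'' is literally about an essentially-finite-type map, but your parenthetical passage to the semilocal ring $(R^{\Gamma}\setminus\mathfrak q)^{-1}R$, which is genuinely finite \'etale over the DVR $R^{\Gamma}_{\mathfrak q}$, handles this cleanly.
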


\begin{proof}
This follows from \cite[Corollary 7.7]{LiftingHomo}.
\end{proof}

\begin{theorem} \label{thm:inj}
If $ \boldsymbol{k} =\mathbb{C}$ then the canonical $ R^{\Gamma }$-linear map $ \pi :\operatorname{Der}( R)^{\Gamma }\rightarrow \operatorname{Der}\left( R^{\Gamma }\right)$ is injective.
\end{theorem}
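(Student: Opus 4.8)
The plan is to reduce the statement to a fact about derivations of function fields. Write $L := \operatorname{Frac}(R)$ and $K := \operatorname{Frac}(R^\Gamma)$. Two standard preliminary observations will be used. First, since $R = \mathbb{C}[T^*V]$ is an integral domain and $\Gamma$ is finite, $K = L^\Gamma$, the fixed subfield: given $\xi = a/b \in L^\Gamma$, multiply numerator and denominator by $\prod_{\gamma \neq e}\gamma\cdot b$ to exhibit $\xi$ as a quotient of two elements of $R^\Gamma$. Second, replacing $\Gamma$ by the quotient $\overline{\Gamma}$ through which it acts faithfully on $T^*V$ changes neither $R^\Gamma$ nor $\operatorname{Der}(R)^\Gamma$, so I may assume the action faithful; then $L/K$ is a finite (Galois) extension.

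Now let $X \in \operatorname{Der}(R)^\Gamma$ with $\pi(X) = 0$, i.e.\ $X(a) = 0$ for all $a \in R^\Gamma$; the goal is $X = 0$. Since $R$ is a domain, $X$ extends uniquely to a $\mathbb{C}$-linear derivation of $L$ by the quotient rule, and $\Gamma$-invariance of $X$ — which says precisely that $X$ commutes with the $\Gamma$-action on $R$ — persists on $L$. For $\xi = a/b \in K$ with $a, b \in R^\Gamma$ we get $X(\xi) = (X(a)b - aX(b))/b^2 = 0$, so $X$ restricts to a $K$-linear derivation of $L$. Because $L/K$ is algebraic and $\operatorname{char} K = 0$, every $\alpha \in L$ is a root of its minimal polynomial $p \in K[T]$, which is separable, so applying $X$ gives $0 = X(p(\alpha)) = p'(\alpha)\,X(\alpha)$ with $p'(\alpha) \neq 0$; hence $X(\alpha) = 0$ for every $\alpha \in L$, so $X = 0$ already on $R$. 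Combined with Proposition \ref{prop:lift} this shows that $\pi$ is an isomorphism.

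If one prefers to avoid fraction fields, the same idea runs fraction-free: for $r \in R$ form the orbit polynomial $p_r(T) = \prod_{s \in \Gamma\cdot r}(T - s)$, whose coefficients are elementary symmetric functions of the orbit, hence $\Gamma$-invariant elements of $R$, i.e.\ elements of $R^\Gamma$; then $p_r$ is monic and separable, $p_r(r) = 0$, and applying any $X$ with $X|_{R^\Gamma} = 0$ yields $p_r'(r)\,X(r) = 0$ with $p_r'(r) = \prod_{s \neq r}(r - s) \neq 0$ in the domain $R$, forcing $X(r) = 0$.

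I do not expect a genuine obstacle here: the only steps requiring justification are the identity $\operatorname{Frac}(R^\Gamma) = \operatorname{Frac}(R)^\Gamma$ and the vanishing of $K$-linear derivations of a separable algebraic extension $L/K$, both classical. The one conceptual point is that $\Gamma$-invariance of $X$ is exactly what upgrades the hypothesis "$X$ kills the invariant ring $R^\Gamma$" to "$X$ kills the invariant field $K$", after which separability finishes the argument. (Only $\operatorname{char}\boldsymbol{k} = 0$ is used for injectivity; the hypothesis $\boldsymbol{k} = \mathbb{C}$ enters solely through the surjectivity statement of Proposition \ref{prop:lift}.)
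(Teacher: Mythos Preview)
Your proof is correct, and your fraction-free variant is particularly clean. The approach, however, is quite different from the paper's. The paper deduces the theorem from Lemma~\ref{lem:Chris}, which it proves by an \emph{analytic} argument: if $X$ were nonzero at some point it would admit a nonconstant local integral curve, and invariants would be constant along it, contradicting the fact that $\operatorname{Spec}(R^\Gamma)$ separates the (closed, since $\Gamma$ is finite) orbits; this uses the Open Mapping Theorem and genuinely needs $\boldsymbol{k}=\mathbb{R}$ or $\mathbb{C}$. Your argument is purely algebraic---separability of the finite extension $L/K$ kills all $K$-linear derivations---and, as you note, works over any field of characteristic zero, which is a real gain.

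One small correction: your remark that ``$\Gamma$-invariance of $X$ is exactly what upgrades the hypothesis'' is a red herring. You never use $\Gamma$-invariance of $X$; the step $X|_{K}=0$ follows from the quotient rule alone once $K=\operatorname{Frac}(R^{\Gamma})$ and $X|_{R^{\Gamma}}=0$. So in fact you have proved, like the paper, the stronger statement of Lemma~\ref{lem:Chris}: any $X\in\operatorname{Der}(R)$ (not just a $\Gamma$-invariant one) with $X|_{R^{\Gamma}}=0$ vanishes.
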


Following \cite{LiftingHomo} we define $ \operatorname{Der}_{\Gamma }( R)^{\Gamma } :=\left\{X\in \operatorname{Der}( R)^{\Gamma } \ |\ X_{|R^{\Gamma }} =0\right\}$ as the subspace of $ \Gamma $-invariant derivations that vanish on $ \Gamma $-invariant functions one derives a short exact sequence of Lie-Rinehart algebras
\begin{align}
0\leftarrow \operatorname{Der}\left( R^{\Gamma }\right)\xleftarrow{\pi }\operatorname{Der}( R)^{\Gamma }\leftarrow \operatorname{Der}_{\Gamma }( R)^{\Gamma }\leftarrow 0.
\end{align}
Here the Lie algebras are viewed as $ R^{\Gamma }$-modules. By Proposition \ref{prop:lift} this sequence splits in the category of $ R^{\Gamma }$-modules, i.e., there is a $ R^{\Gamma }$-linear map
\begin{align}\label{eq:lift}
\lambda :\operatorname{Der}\left( R^{\Gamma }\right)\rightarrow \operatorname{Der}( R)^{\Gamma }
\end{align}
such that $ \pi \lambda =\operatorname{id}_{\operatorname{Der}\left( R^{\Gamma }\right)}$. In fact, such a lift can be constructed by choosing generators of
$ \operatorname{Der}\left( R^{\Gamma }\right)$ and lifting each of them to $ \operatorname{Der}( R)^{\Gamma }$ according to Proposition \ref{prop:lift}.
The proof of Theorem \ref{thm:inj} is a consequences of the following observation.

\begin{lemma} \label{lem:Chris}
Let $\boldsymbol{k}=\mathbb R$ or $\mathbb C$.
Assume $ \Gamma \rightarrow \operatorname{End}( U)$ to be a representation of the finite group $ \Gamma $ on the finite dimensional $ \boldsymbol{k}$-vector space $ U$ and let $ S=\boldsymbol{k}[ U]$. Then any $ X\in \operatorname{Der}( S)$ such that $ X_{|S^{\Gamma }} =0$ has to vanish.
\end{lemma}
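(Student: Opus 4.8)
The plan is to reduce the statement to a linear-algebra fact about the coordinate ring of a finite-dimensional representation, using the fact that $S = \boldsymbol{k}[U]$ is a polynomial ring and that $S^\Gamma$ contains ``enough'' invariant functions to separate tangent directions. Concretely, write $S = \boldsymbol{k}[u_1,\dots,u_m]$ with $u_1,\dots,u_m$ a basis of $U^*$, so that a derivation $X \in \operatorname{Der}(S)$ is determined by the $m$-tuple $(X(u_1),\dots,X(u_m)) \in S^m$. The claim $X_{|S^\Gamma} = 0 \Rightarrow X = 0$ is therefore equivalent to: if $X(g) = 0$ for every $g \in S^\Gamma$, then $X(u_i) = 0$ for each $i$. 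The key point is that although the linear coordinates $u_i$ themselves are generally not $\Gamma$-invariant, one can still recover each $X(u_i)$ from the values of $X$ on suitable invariants.

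The main step I would carry out is the following averaging/polarization argument. Fix a point $v \in U$ that is \emph{generic} in the sense that its $\Gamma$-stabilizer is trivial (such $v$ exist over $\boldsymbol{k} = \mathbb{R}$ or $\mathbb{C}$ since the union of the fixed-point subspaces of non-identity elements is a proper closed subset, using $\operatorname{char}\boldsymbol{k} = 0$ so each such subspace is proper). Then the orbit map $\Gamma \to \Gamma v$ is a bijection, and by a standard interpolation one shows that the restriction homomorphism $S^\Gamma \to \boldsymbol{k}$, $g \mapsto g(v)$, together with first-order information at $v$, already determines a covector: more precisely, the invariant functions and their differentials at a generic point span the full cotangent space $T_v^* U$. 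Indeed, if some nonzero covector $\xi \in T_v^* U$ annihilated $\operatorname{d} g|_v$ for all $g \in S^\Gamma$, then the quotient map $U \to U/\!\!/\Gamma = \operatorname{Spec}(S^\Gamma)$ would have a non-injective differential at $v$, contradicting the fact that this map is \'etale at points with trivial stabilizer (or, more elementarily, that the $\Gamma$-orbit of a generic linear form, summed, still detects that direction). Hence, evaluating the identity ``$X(g) = 0$ for all $g \in S^\Gamma$'' at $v$ forces the polynomial vector $(X(u_1),\dots,X(u_m))$ to vanish at $v$. Since the set of such generic $v$ is Zariski dense in $U$, each $X(u_i)$ vanishes identically, so $X = 0$.

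An alternative, purely algebraic route that avoids choosing a point: use that $S$ is a finitely generated module over $S^\Gamma$ (Noether), and that the extension of fraction fields $\operatorname{Frac}(S^\Gamma) \subseteq \operatorname{Frac}(S)$ is a finite \emph{separable} (Galois, with group $\Gamma$) extension because $\operatorname{char}\boldsymbol{k} = 0$. A derivation $X$ of $S$ vanishing on $S^\Gamma$ extends to a derivation of $\operatorname{Frac}(S)$ vanishing on $\operatorname{Frac}(S^\Gamma)$; but for a separable algebraic extension of fields the module of relative K\"ahler differentials is zero, so there are no nonzero relative derivations, forcing $X = 0$ on $\operatorname{Frac}(S)$ and hence on $S \subseteq \operatorname{Frac}(S)$.

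The part that needs the most care is the claim that $S^\Gamma$ is ``large enough'' — either the density/genericity statement that invariant functions and their differentials span a generic cotangent space, or, in the field-theoretic version, the verification that $\operatorname{Frac}(S)/\operatorname{Frac}(S^\Gamma)$ is genuinely Galois with group $\Gamma$ (faithfulness of the action on $\operatorname{Frac}(S)$, which again uses that $\Gamma \to \operatorname{End}(U)$ is a representation, i.e.\ faithful, or at least that $\Gamma$ acts faithfully on $S$). I would present the field-theoretic argument as the clean proof, since it is short and sidesteps the point-chasing; the genericity argument is a useful sanity check and makes the geometric content transparent.
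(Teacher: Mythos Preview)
Your proposal is correct, and both routes you sketch are sound. The field-theoretic argument is particularly clean: $S$ is integral over $S^{\Gamma}$ by Noether, so $\operatorname{Frac}(S)/\operatorname{Frac}(S^{\Gamma})$ is a finite extension, automatically separable in characteristic zero, whence $\operatorname{Der}_{\operatorname{Frac}(S^{\Gamma})}(\operatorname{Frac}(S))=0$; the unique extension of $X$ to $\operatorname{Frac}(S)$ then vanishes. (Note that this version does not even require faithfulness of the $\Gamma$-action, so your parenthetical ``i.e.\ faithful'' is unnecessary and slightly misleading; the argument goes through for $\Gamma/\ker$ without comment.)

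The paper, by contrast, gives an \emph{analytic} argument and this is the reason for the hypothesis $\boldsymbol{k}=\mathbb{R}$ or $\mathbb{C}$: assuming $X$ does not vanish at some $u_{0}$, it takes an analytic integral curve $\boldsymbol{z}:D_{\delta}\to U$ through $u_{0}$; since $X$ kills every $f\in S^{\Gamma}$, each invariant is constant along the curve; but the image of a non-constant curve is infinite while the orbit $\Gamma u_{0}$ is finite, so there is a point $u\in\boldsymbol{z}(D_{\delta})\setminus\Gamma u_{0}$, and the fact that $S^{\Gamma}$ separates closed orbits gives the contradiction. Your \'etale/genericity argument is the algebro-geometric shadow of this same idea (invariants locally embed the quotient near a free point), whereas your separability argument is genuinely different and in fact stronger: it proves the lemma over an arbitrary field of characteristic zero, not just $\mathbb{R}$ or $\mathbb{C}$.
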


\begin{proof}
Let $ X\in \operatorname{Der}( S)$ be such that $ X_{|S^{\Gamma }} =0$. We argue by contradiction. Choose linear coordinates $ z^{1} ,\dotsc z^{n}$ on $ U$ and write
$$ X=\sum _{j} X^{j}\frac{\partial }{\partial z^{j}}$$ with $ X^{j} \in S,\ j=1,\dotsc ,n$. Assume that there is a $ u_{0} \in U$ such that not all $ X^{j}( u_{0}) ,\ j=1,\dotsc ,n$ vanish. There is an analytic integral curve $ \boldsymbol{z} :D_{\delta }\rightarrow U,\ t\mapsto \boldsymbol{z}( t) ,$ defined on an open discs $ D_{\delta } \subseteq \boldsymbol{k}$ of radius $ \delta \in ] 0,\infty ]$ around $ 0$ such that
$ \boldsymbol{z}( 0) =u_{0}$ and $ d\boldsymbol{z}^{j}( t) /dt =X^{j}(\boldsymbol{z}( t))$ for all $ j=1,\dotsc ,n$ and $ t\in D_{\delta }$ (see, e.g., \cite{IlyaYak}). By our assumption $ t\mapsto \boldsymbol{z}( t)$ is non-constant. Now an $ f\in S^{\Gamma }$ has to be constant on $ \boldsymbol{z}( D_{\delta })$, which is open by the Open Mapping Theorem (see, e.g., \cite{Stein}). The categorical quotient $ U/\!\!/\Gamma =\operatorname{Spec}\left( S^{\Gamma }\right)$ separates closed orbits and, since $ \Gamma $ is finite, all orbits are closed. Since $ \boldsymbol{z}( D_{\delta })$
is open there is a $ u\in \boldsymbol{z}( D_{\delta }) \backslash ( \Gamma u_{0})$. But this means that there is an $ f\in S^{\Gamma }$ with $ f( u) \neq f( u_{0})$, contradicting our assumption.
\end{proof}

\begin{corollary}\label{cor:nodeg}
The restriction $\omega _{0}{}_{|\operatorname{Der}( R)^{\Gamma }}$ of the canonical symplectic form $\omega _{0}$ to the $ R^{\Gamma }$-module $\operatorname{Der}( R)^\Gamma$ is non-degenerate.
\end{corollary}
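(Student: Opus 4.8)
The plan is to leverage the fact that $\omega_0$ is already non-degenerate on the \emph{free} $R$-module $\operatorname{Der}(R)$ and to transport this to the invariants by averaging. Recall that $\omega_0=\sum_i\operatorname{d}p_i\wedge\operatorname{d}q^i$ induces an $R$-linear isomorphism $\operatorname{Der}(R)\xrightarrow{\ \sim\ }\operatorname{Hom}_R(\operatorname{Der}(R),R)$, $Y\mapsto\omega_0(Y,\ )$, and that, because $\Gamma$ is finite and $\operatorname{char}\boldsymbol{k}=0$, the Reynolds operator $\rho=\tfrac1{|\Gamma|}\sum_{\gamma\in\Gamma}\gamma$ is available both on $R$ (write $\rho_R\colon R\to R^\Gamma$) and on $\operatorname{Der}(R)$. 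Since $\omega_0\in\bigl(\bigwedge^2_R\Omega_{R|\boldsymbol{k}}\bigr)^\Gamma$ one has $\omega_0(\gamma\cdot Y,\gamma\cdot Z)=\gamma\cdot\omega_0(Y,Z)$ for all $\gamma\in\Gamma$; in particular $\omega_0(Y,Z)\in R^\Gamma$ as soon as $Y,Z\in\operatorname{Der}(R)^\Gamma$, so $\omega_0{}_{|\operatorname{Der}(R)^\Gamma}$ really is an element of $\operatorname{Alt}^2_{R^\Gamma}\bigl(\operatorname{Der}(R)^\Gamma,R^\Gamma\bigr)$, and it is this form whose non-degeneracy in the sense of Section~\ref{sec:naive} we must establish.

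So suppose $Y\in\operatorname{Der}(R)^\Gamma$ satisfies $\omega_0(Y,Z)=0$ for every $Z\in\operatorname{Der}(R)^\Gamma$; I want to conclude $Y=0$. By non-degeneracy of $\omega_0$ on the free module it suffices to check $\omega_0(Y,W)=0$ for \emph{all} $W\in\operatorname{Der}(R)$. For such a $W$ the averaged derivation $\rho(W)$ is $\Gamma$-invariant, and combining $\gamma\cdot Y=Y$ with the invariance of $\omega_0$ one gets $\omega_0(Y,\rho(W))=\rho_R(\omega_0(Y,W))$; the left-hand side vanishes by hypothesis, so $\rho_R(\omega_0(Y,W))=0$. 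Specializing to $W=a\,\partial/\partial z^i$ with $a\in R$ arbitrary (here $z^1,\dots,z^m$ are linear coordinates on $T^*V$) yields $\rho_R(a\,g_i)=0$ for all $a\in R$, where $g_i:=\omega_0(Y,\partial/\partial z^i)\in R$.

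The remaining point is the elementary lemma that $\rho_R(ag)=0$ for all $a\in R$ forces $g=0$. I would prove it by a point evaluation: if $g\neq0$ pick $v_0$ with $g(v_0)\neq0$ (possible since $\boldsymbol{k}$ is infinite), note that the $\Gamma$-orbit of $v_0$ is a finite set of pairwise distinct points, and use interpolation to find $h\in R$ equal to $1$ at $v_0$ and to $0$ at the remaining orbit points; then $\rho_R(hg)(v_0)=\tfrac{|\Gamma_{v_0}|}{|\Gamma|}\,g(v_0)\neq0$, a contradiction. (Equivalently one may invoke non-degeneracy of the trace form of $\operatorname{Frac}(R)$ over $\operatorname{Frac}(R)^\Gamma=\operatorname{Frac}(R^\Gamma)$, valid in characteristic zero.) Hence all $g_i$ vanish, so $\omega_0(Y,\ )$ is zero on the generators $\partial/\partial z^i$ and therefore on $\operatorname{Der}(R)$, whence $Y=0$ by non-degeneracy of $\omega_0$ on the free module.

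I expect the genuine obstacle to be exactly this last lemma — everything before it is formal bookkeeping with the $\Gamma$-action on functions versus derivations — since it is where one really uses that $R$ is a polynomial ring over a field and that $\operatorname{char}\boldsymbol{k}=0$ (through the existence of $\rho$ and the separability making the trace pairing perfect). A slightly more structural packaging of the same argument: apply the exact functor of $\Gamma$-invariants to the $\Gamma$-equivariant isomorphism $\operatorname{Der}(R)\cong\operatorname{Hom}_R(\operatorname{Der}(R),R)$, then show that the restriction map $\operatorname{Hom}_R(\operatorname{Der}(R),R)^\Gamma\to\operatorname{Hom}_{R^\Gamma}\bigl(\operatorname{Der}(R)^\Gamma,R^\Gamma\bigr)$ is injective, which once more reduces to the lemma above.
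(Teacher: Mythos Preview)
Your argument is correct, but it takes a genuinely different route from the paper's. The paper tests $X\in\ker(\omega_0{}_{|\operatorname{Der}(R)^\Gamma})$ only against the Hamiltonian vector fields $\{f,\ \}$ for $f\in R^\Gamma$ (which are automatically $\Gamma$-invariant), uses the defining identity $\omega_0(X,\{f,\ \})=-X(f)$ to conclude $X_{|R^\Gamma}=0$, and then invokes Theorem~\ref{thm:inj} (i.e.\ Lemma~\ref{lem:Chris}, proved via integral curves and the separation of orbits by invariants) to force $X=0$. You instead bypass the Poisson structure entirely: by Reynolds averaging you extend the vanishing $\omega_0(Y,Z)=0$ from invariant $Z$ to all $Z\in\operatorname{Der}(R)$, reducing to the purely algebraic lemma that $\rho_R(ag)=0$ for all $a\in R$ forces $g=0$. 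Your approach buys a self-contained algebraic proof that works over any infinite field of characteristic zero and applies verbatim to any $\Gamma$-invariant non-degenerate $R$-bilinear form, not just the symplectic one; the paper's approach, in context, is shorter because it recycles the already-established injectivity of $\pi$ and makes direct use of the symplectic/Poisson correspondence that is the paper's theme. Note that both arguments ultimately rest on the same geometric fact that $\Gamma$-invariants separate orbits---the paper through Lemma~\ref{lem:Chris}, you through your interpolation step.
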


\begin{proof}
We will prove $ \ker( \omega _{0}{}_{|\operatorname{Der}( P)^{\Gamma }}) \subseteq \operatorname{Der}_{\Gamma }( R)^{\Gamma }$, so that the claim follows from Theorem \ref{thm:inj}. Let $ X\in \operatorname{Der}( R)^{\Gamma } \ $. We want to show that $\omega _{0}( X,Y) =0$ for all $ Y\in \operatorname{Der}( R)^{\Gamma }$ implies $ X_{|R^{\Gamma }} =0$. But for all $ f\in R^{\Gamma }$ we have $ 0=\omega _{0}( X,\{f,\ \}) =-X( f)$.
\end{proof}

\begin{corollary}\label{cor:sympform}
Using the lift $ \lambda $ of Equation \eqref{eq:lift} and assuming $ X,Y\in \operatorname{Der}\left( R^{\Gamma }\right)$
$$ \omega ( X,Y) :=\omega _{0}( \lambda ( X) ,\lambda ( Y))$$
defines a non-degenerate closed form $ \omega \in \operatorname{Alt}_{R^{\Gamma }}^{2}\left(\operatorname{Der}\left( R^{\Gamma }\right) ,R^{\Gamma }\right)$.
\end{corollary}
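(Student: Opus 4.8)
The plan is to establish, in turn, that $\omega$ is well-defined with values in $R^{\Gamma}$, that it is $R^{\Gamma}$-bilinear and alternating, that it is non-degenerate, and finally that it is closed; the last point is the crux.

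First I would record that $\pi$ is in fact an isomorphism of Lie-Rinehart algebras: it is onto by Proposition \ref{prop:lift} and injective by Theorem \ref{thm:inj} --- indeed, Lemma \ref{lem:Chris} applied to $S=R=\boldsymbol{k}[T^{*}V]$ forces $\operatorname{Der}_{\Gamma}(R)^{\Gamma}=0$, so the short exact sequence of Lie-Rinehart algebras collapses. Hence the lift $\lambda$ of Equation \eqref{eq:lift} coincides with $\pi^{-1}$; in particular $\lambda$ is an $R^{\Gamma}$-linear isomorphism and, being the inverse of a morphism of Lie-Rinehart algebras, is itself such a morphism. That $\omega$ takes values in $R^{\Gamma}$ is then immediate: $\omega_{0}$ is $\Gamma$-invariant and $\lambda(X),\lambda(Y)\in\operatorname{Der}(R)^{\Gamma}$, so $\omega_{0}(\lambda(X),\lambda(Y))$ is fixed by $\Gamma$. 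Bilinearity over $R^{\Gamma}$ and antisymmetry follow from the corresponding properties of $\omega_{0}$ over $R$ together with the $R^{\Gamma}$-linearity of $\lambda$, so $\omega\in\operatorname{Alt}_{R^{\Gamma}}^{2}(\operatorname{Der}(R^{\Gamma}),R^{\Gamma})$.

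For non-degeneracy, suppose $\omega(X,Y)=0$ for all $Y\in\operatorname{Der}(R^{\Gamma})$. Since $\lambda$ maps onto $\operatorname{Der}(R)^{\Gamma}$, this means $\omega_{0}(\lambda(X),Z)=0$ for every $Z\in\operatorname{Der}(R)^{\Gamma}$; by Corollary \ref{cor:nodeg} the restriction of $\omega_{0}$ to $\operatorname{Der}(R)^{\Gamma}$ is non-degenerate, whence $\lambda(X)=0$, and injectivity of $\lambda$ gives $X=0$.

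The substantive part is closedness. I would view $\omega_{0}$, through the injective $R^{\Gamma}$-linear map $\left(\bigwedge{}_{R}^{2}\Omega_{R|\boldsymbol{k}}\right)^{\Gamma}\hookrightarrow\operatorname{Alt}_{R^{\Gamma}}^{2}(\operatorname{Der}(R)^{\Gamma},R^{\Gamma})$, as an alternating form $\widetilde{\omega}_{0}$ on the $R^{\Gamma}$-module $\operatorname{Der}(R)^{\Gamma}$. By the discussion preceding Proposition \ref{prop:lift} this embedding intertwines the de Rham differential of the invariant complex $C_{\operatorname{dR}}^{\Gamma}(R)$ with the Koszul-formula differential on $\operatorname{Alt}_{R^{\Gamma}}(\operatorname{Der}(R)^{\Gamma},R^{\Gamma})$; since $\omega_{0}$ is the canonical symplectic form, $\operatorname{d}_{\operatorname{dR}}\omega_{0}=0$ and therefore $\operatorname{d}_{\operatorname{dR}}\widetilde{\omega}_{0}=0$. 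By construction $\omega(X,Y)=\widetilde{\omega}_{0}(\lambda(X),\lambda(Y))$, i.e.\ $\omega=\lambda^{*}\widetilde{\omega}_{0}$ is the pullback of $\widetilde{\omega}_{0}$ along the Lie-Rinehart isomorphism $\lambda$. Because the Koszul formula for $\operatorname{d}_{\operatorname{dR}}$ refers only to the $R^{\Gamma}$-module structure, the Lie bracket and the anchor action, all of which $\lambda$ respects, pullback along $\lambda$ commutes with $\operatorname{d}_{\operatorname{dR}}$, so $\operatorname{d}_{\operatorname{dR}}\omega=\lambda^{*}(\operatorname{d}_{\operatorname{dR}}\widetilde{\omega}_{0})=0$. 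The point that needs genuine care --- and which I regard as the real obstacle --- is verifying that this embedding of the invariant de Rham complex is a chain map, equivalently that evaluating the ordinary de Rham differential of a $\Gamma$-invariant form on $\Gamma$-invariant derivations reproduces the Koszul formula; once that is secured, naturality of the Koszul formula under $\lambda$, and hence closedness of $\omega$, are formal.
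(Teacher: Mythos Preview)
Your proof is correct and follows exactly the route the paper intends. The paper does not give an explicit proof of this corollary: it is left as an immediate consequence of the surrounding discussion, namely that $\pi$ is an isomorphism (Proposition~\ref{prop:lift} and Theorem~\ref{thm:inj}), that the invariant de Rham complex embeds via the Koszul formula into $\operatorname{Alt}_{R^{\Gamma}}(\operatorname{Der}(R)^{\Gamma},R^{\Gamma})$, and that the restriction of $\omega_{0}$ is non-degenerate (Corollary~\ref{cor:nodeg}). Your concern about the embedding being a chain map is legitimate but the paper simply asserts this in the paragraph preceding Proposition~\ref{prop:lift} (``the differential on the right hand side can be expressed by the Koszul formula''), so you have correctly located the one point the paper treats as understood rather than proved.
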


In the case of the double cone, seen as a symplectic $ \mathbb{Z}_{2}$-quotient, we put $ V=\mathbb{C}$ so that $ T^{*} V\simeq \mathbb{C}^{2} .$ We use canonical coordinates $ ( q,p)$ for $ T^{*} V$ \ so that $ \mathbb{C}\left[ T^{*} V\right] \simeq \mathbb{C}[ q,p] =:R$. The \ $ \mathbb{Z}_{2}$-action in terms of the canonical coordinates is given by $ ( q,p) \mapsto ( -q,-p)$ and the invariants are $ u_{1} =q^{2} ,\ u_{2} =p^{2}$ and $ u_{3} =qp$ obeying the relation $ u_{3}^{2} -u_{1} u_{2} =0$. Then $ \operatorname{Der}( R)^{\Gamma }$ is generated as an $ R^{\Gamma }$-module by the vector fields that are obtained by substituting
simultaneously in each monomial of $ u_{1} ,u_{2} ,u_{3}$ a $ q$ by $ \partial /\partial p$ or by substituting
simultaneously in each monomial of $ u_{1} ,u_{2} ,u_{3}$ a $ p$ by $ \partial /\partial q$. This way we find the four invariant vector fields
\begin{align}
\begin{array}{c||c|c|c}
 & u_{1} & u_{2} & u_{3}\\
\hline\hline
q\frac{\partial }{\partial q} & 2u_{1} & 0 & u_{3}\\\hline
q\frac{\partial }{\partial p} & 0 & 2u_{3} & u_{1}\\\hline
p\frac{\partial }{\partial q} & 2u_{3} & 0 & u_{2}\\\hline
p\frac{\partial }{\partial p} & 0 & 2u_{2} & u_{3}
\end{array}
\end{align}
which can be hence interpreted in terms of $ P=\mathbb{C}\left[ x^{1} ,x^{2} ,x^{3}\right] /\left( x^{3} x^{3} -x^{1} x^{2}\right)$ as
\begin{align*}
q\frac{\partial }{\partial q} \mapsto 2x^{1}\frac{\partial }{\partial x^{1}} +x^{3}\frac{\partial }{\partial x^{3}} ,\ \ q\frac{\partial }{\partial p} \mapsto 2x^{3}\frac{\partial }{\partial x^{2}} +x^{1}\frac{\partial }{\partial x^{3}} ,\ \\
p\frac{\partial }{\partial q} \mapsto 2x^{3}\frac{\partial }{\partial x^{1}} +x^{2}\frac{\partial }{\partial x^{3}} \ ,\ p\frac{\partial }{\partial p} \mapsto 2x^{2}\frac{\partial }{\partial x^{2}} +x^{3}\frac{\partial }{\partial x^{3}} .
\end{align*}
The base change to the system of generators of $ \operatorname{Der}_{I}( P)$ of the previous section is
\begin{align*}
\begin{bmatrix}
2x^{1} & 0 & 2x^{3} & 0\\
0 & 2x^{3} & 0 & 2x^{2}\\
x^{3} & x^{1} & x^{2} & x^{3}
\end{bmatrix}\begin{bmatrix}
0 & 0 & -1 & 1\\
2 & 0 & 0 & 0\\
0 & -2 & 0 & 0\\
0 & 0 & 1 & 0
\end{bmatrix} =\begin{bmatrix}
0 & -4x^{3} & -2x^{1} & 2x^{1}\\
4x^{3} & 0 & 2x^{2} & 0\\
2x^{1} & -2x^{2} & 0 & x^{3}
\end{bmatrix}.
\end{align*}
But from this we deduce the relations
\begin{align*}
0&=\begin{bmatrix}
0 & -4x^{3} & -2x^{1} & 2x^{1}\\
4x^{3} & 0 & 2x^{2} & 0\\
2x^{1} & -2x^{2} & 0 & x^{3}
\end{bmatrix}
\begin{bmatrix}
x^{2} & 0 & 0 & -x^{3}\\
0 & x^{1} & x^{3} & 0\\
-2x^{3} & 0 & 0 & 2x^{1}\\
-2x^{3} & 2x^{3} & 2x^{2} & 2x^{1}
\end{bmatrix}\\
&=\begin{bmatrix}
2x^{1} & 0 & 2x^{3} & 0\\
0 & 2x^{3} & 0 & 2x^{2}\\
x^{3} & x^{1} & x^{2} & x^{3}
\end{bmatrix}\begin{bmatrix}
0 & 0 & -1 & 1\\
2 & 0 & 0 & 0\\
0 & -2 & 0 & 0\\
0 & 0 & 1 & 0
\end{bmatrix}
\begin{bmatrix}
x^{2} & 0 & 0 & -x^{3}\\
0 & x^{1} & x^{3} & 0\\
-2x^{3} & 0 & 0 & 2x^{1}\\
-2x^{3} & 2x^{3} & 2x^{2} & 2x^{1}
\end{bmatrix}\\
&=\begin{bmatrix}
2x^{1} & 0 & 2x^{3} & 0\\
0 & 2x^{3} & 0 & 2x^{2}\\
x^{3} & x^{1} & x^{2} & x^{3}
\end{bmatrix}\begin{bmatrix}
0 & 2x^{3} & 2x^{2} & 0\\
2x^{2} & 0 & 0 & -2x^{3}\\
0 & -2x^{1} & -2x^{3} & 0\\
-2x^{3} & 0 & 0 & 2x^{1}
\end{bmatrix}
\end{align*}
Expressed in terms of $ q\frac{\partial }{\partial q} ,\ q\frac{\partial }{\partial p} ,\ p\frac{\partial }{\partial q} ,\ p\frac{\partial }{\partial p}$ this relation turns out to be trivial
\begin{align*}
0&=2p^{2} \ q\frac{\partial }{\partial p} -2qp\ \ p\frac{\partial }{\partial p},\ \ \ 0=2qp\ q\frac{\partial }{\partial q} -2q^{2} \ p\frac{\partial }{\partial q} ,\\
0&=2p^{2} \ q\frac{\partial }{\partial q} -2qp\ p\frac{\partial }{\partial q} ,\ \ \ 0=-2qp\ \ q\frac{\partial }{\partial p} +2q^{2} \ p\frac{\partial }{\partial p},
\end{align*}
and we verify once again that $ \pi $ is an isomorphism. Since the symplectic form of the previous section is unique it has to coincide with the one of Corollary \ref{cor:sympform}.

\section{The simple cone as a Poisson differential space}\label{sec:Diff}

The double cone $ P=\mathbb{C}\left[ x^{1} ,x^{2} ,x^{3}\right] /\left( x^{3} x^{3} -x^{1} x^{2}\right)$ can be understood as the complexification $ \mathbb{C} \otimes _{\mathbb{R}}\mathbb{R}[\mathbb{C} /\mathbb{Z}_{2}]$ where $ \mathbb{Z}_{2}$ is seen as a subgroup of $ \operatorname{SU}_{2}$ and $ \mathbb{R}[\mathbb{C} /\mathbb{Z}_{2}]$ is the algebra of regular functions on the orbit space $ \mathbb{C} /\mathbb{Z}_{2}$. In fact, writing $ z=q+\sqrt{-1} p$ the fundamental invariants with respect to $ \mathbb{Z}_{2} \subset \operatorname{SU}_{2} \subset \operatorname{SL}_{2}(\mathbb{C})$, $ z\mapsto -z$, \ can be interpreted as the real invariants
$$ u_{1} =\frac{1}{4}( z+\overline{z})^{2} ,\ u_{2} =\frac{-1}{4}( z-\overline{z})^{2} ,\ u_{3} =\frac{-\sqrt{-1}}{4}\left( z^{2} -\overline{z}^{2}\right) .$$
The image $ \mathcal{X}$ of the orbit space $ \mathbb{C}/\mathbb{Z}_{2}$ under $ \boldsymbol{u} =( u_{1} ,u_{2} ,u_{3}) :\mathbb{C}\rightarrow \mathbb{R}^{3}$ is actually given as the semialgebraic set
$$ x^{3} x^{3} -x^{1} x^{2} =0,\ x^{1} ,x^{2} \geq 0,$$
which is a simple cone. The orbit space $ \mathbb{C} /\mathbb{Z}_{2}$ and the subeuclidean space $ \mathcal{X}$ are actually Poisson diffeomorphic differential spaces, see \cite{FHS}. We use here the notion of a differential space in the sense of Sikorsky, see also, e.g., \cite{SniatyckiBook}. The diffeomorphism is provided by the so-called \textit{Hilbert embedding}
 $$ \underline{\boldsymbol{u}} :\left(\mathbb{C} /\mathbb{Z}_{2} ,\mathcal{C}^{\infty }(\mathbb{C})^{\mathbb{Z}_{2}}\right)\rightarrow \left(\mathcal{X} ,\mathcal{C}^{\infty }(\mathcal{X})\right) ,$$
where
$ \underline{\boldsymbol{u}}:\mathbb{C}/\mathbb{Z}_{2}\rightarrow \mathbb{R}^{3}$ denotes the map induced by $ \boldsymbol{u}$.

Let $\mathcal{Y}\subseteq \R^3$ be the real algebraic variety given by $x^{3} x^{3} -x^{1} x^{2}$ so that
$ \mathbb{R}[\mathcal{Y}] =\mathbb{R}\left[ x^{1} ,x^{2} ,x^{3}\right] /\left( x^{3} x^{3} -x^{1} x^{2}\right)$.
The generators $ \left\{x^{1} ,\ \right\},\ \left\{x^{2} ,\ \right\},\ \left\{x^{3} ,\ \right\},\ X_{4}$ of the $ P$-module of derivations, being real,
$$ \operatorname{Der}_{\left( x^{3} x^{3} -x^{1} x^{2}\right)}\left( P/\left( x^{3} x^{3} -x^{1} x^{2}\right)\right)$$
actually coincide with the generators of the $ \mathbb{R}[\mathcal{Y}]$-module of derivations $\operatorname{Der}(\mathbb{R}[\mathcal{Y}])$.
We would like to investigate $ \operatorname{Der}\left(\mathcal{C}^{\infty }(\mathcal{X})\right)$. We start with the following observation.

\begin{lemma}
\label{lem:diffder}
Let $ \mathcal{I} $ be a closed ideal in $ \mathcal{C}^{\infty }\left(\mathbb{R}^{n}\right)$. Then
 $$ \operatorname{Der}\left(\mathcal{C}^{\infty }\left(\mathbb{R}^{n}\right) /\mathcal{I}\right) \simeq \operatorname{Der}_{\mathcal{I}}\left(\mathcal{C}^{\infty }\left(\mathbb{R}^{n}\right)\right) /\mathcal{I}\operatorname{Der}\left(\mathcal{C}^{\infty }\left(\mathbb{R}^{n}\right)\right).$$
Here
$\operatorname{Der}_{\mathcal{I}}\left(\mathcal{C}^{\infty }\left(\mathbb{R}^{n}\right)\right) =\left\{X\in \operatorname{Der}\left(\mathcal{C}^{\infty }\left(\mathbb{R}^{n}\right)\right) \ |\ X(\mathcal{I}) \subseteq \mathcal{I}\right\}$
denotes the $ \mathcal{C}^{\infty }\left(\mathbb{R}^{n}\right)$-submodule of tangential derivations.
\end{lemma}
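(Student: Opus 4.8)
The plan is to mimic the proof of the algebraic statement (\cite[Lemma 2.1.2]{Simis}) recalled earlier, but taking care of the analytic subtleties that arise because $\mathcal{C}^\infty(\mathbb{R}^n)$ is not Noetherian and ideals need to be closed. Write $R=\mathcal{C}^\infty(\mathbb{R}^n)$, $B=R/\mathcal{I}$, and let $\pi\colon R\to B$ be the quotient map. First I would construct the natural map $\Phi\colon\operatorname{Der}_{\mathcal{I}}(R)\to\operatorname{Der}(B)$: given $X\in\operatorname{Der}_{\mathcal{I}}(R)$, the condition $X(\mathcal{I})\subseteq\mathcal{I}$ guarantees that $\bar{X}(a+\mathcal{I}):=X(a)+\mathcal{I}$ is well-defined, and one checks immediately that $\bar X$ is an $\mathbb{R}$-linear derivation of $B$. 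This $\Phi$ is clearly $R$-linear (hence $B$-linear after passing to the quotient), and its kernel contains $\mathcal{I}\operatorname{Der}(R)$ since for $X\in\operatorname{Der}(R)$ and $h\in\mathcal{I}$ one has $(hX)(a)=hX(a)\in\mathcal{I}$. So $\Phi$ descends to a $B$-linear map $\operatorname{Der}_{\mathcal{I}}(R)/\mathcal{I}\operatorname{Der}(R)\to\operatorname{Der}(B)$, and it remains to show this is bijective.

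\emph{Surjectivity.} Given $\delta\in\operatorname{Der}(B)$, I need to lift it to a tangential derivation of $R$. Compose with $\pi$ to get an $\mathbb{R}$-linear map $\delta\circ\pi\colon R\to B$ which is a derivation over the algebra map $\pi$. The point is to choose, for each coordinate $x^i$, a function $X^i\in R$ with $\pi(X^i)=\delta(\pi(x^i))$ — possible since $\pi$ is onto — and set $X=\sum_i X^i\,\partial/\partial x^i$. One then verifies $X\in\operatorname{Der}_{\mathcal{I}}(R)$ and $\Phi(X)=\delta$. The verification that $\pi(X(a))=\delta(\pi(a))$ for \emph{all} $a\in R$, not merely polynomials, is where smooth function theory enters: it follows from Hadamard's lemma (Taylor's formula with integral remainder) together with the characterization that a derivation of $\mathcal{C}^\infty(\mathbb{R}^n)$ is a vector field, i.e.\ $\operatorname{Der}(R)=\bigoplus_i R\,\partial/\partial x^i$; granting this, both $\pi\circ X$ and $\delta\circ\pi$ are derivations $R\to B$ agreeing on the $x^i$, and a standard argument (they agree on polynomials, hence on jets, hence — using that $\mathcal{I}$ is closed so $B$ is a complete topological algebra and flat functions lie in any point's ideal after the appropriate closure argument) on all of $R$. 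In particular $X(\mathcal{I})\subseteq\ker(\pi\circ X)^{-1}?$ — more precisely, if $h\in\mathcal{I}$ then $\pi(X(h))=\delta(\pi(h))=\delta(0)=0$, so $X(h)\in\ker\pi=\mathcal{I}$, giving $X\in\operatorname{Der}_{\mathcal{I}}(R)$.

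\emph{Injectivity.} Suppose $X\in\operatorname{Der}_{\mathcal{I}}(R)$ with $\Phi(X)=0$, i.e.\ $X(a)\in\mathcal{I}$ for all $a\in R$. Writing $X=\sum_i X^i\,\partial/\partial x^i$, apply $X$ to the coordinate functions to get $X^i=X(x^i)\in\mathcal{I}$ for each $i$; hence $X=\sum_i X^i\,\partial/\partial x^i\in\mathcal{I}\operatorname{Der}(R)$, using once more that $\operatorname{Der}(R)$ is the free module on $\partial/\partial x^1,\dots,\partial/\partial x^n$. This shows $\ker\Phi=\mathcal{I}\operatorname{Der}(R)$ exactly, completing the argument.

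The main obstacle is the one genuinely non-formal input: the identification $\operatorname{Der}(\mathcal{C}^\infty(\mathbb{R}^n))=\bigoplus_i\mathcal{C}^\infty(\mathbb{R}^n)\,\partial/\partial x^i$ and, relatedly, the fact that a derivation $R\to B$ into the quotient by a \emph{closed} ideal is determined by its values on the coordinates. Both rest on Hadamard's lemma and the treatment of flat functions, and the closedness hypothesis on $\mathcal{I}$ is exactly what is needed so that $R/\mathcal{I}$ is Hausdorff and these approximation arguments go through; I would cite the standard references on differential spaces (e.g.\ \cite{SniatyckiBook,Navarro}) for these facts rather than reprove them. Once that is in hand, the algebraic skeleton of \cite[Lemma 2.1.2]{Simis} transfers verbatim.
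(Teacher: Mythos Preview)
Your overall architecture matches the paper's exactly: both adapt \cite[Lemma 2.1.2]{Simis}, define the obvious map $\varphi$, and compute $\ker\varphi=\mathcal{I}\operatorname{Der}(R)$ by evaluating on the coordinate functions, using that $\operatorname{Der}(\mathcal{C}^\infty(\mathbb{R}^n))$ is free on the $\partial/\partial x^i$. That part is fine and identical.

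The difference is in surjectivity. The paper does \emph{not} lift $\delta$ on coordinates and then argue the lift reproduces $\delta$; instead it invokes Navarro's second fundamental sequence
\[
0\leftarrow \mathcal{D}_{B|\mathbb{R}}\leftarrow B\,\hat{\otimes}_R\,\mathcal{D}_{R|\mathbb{R}}\xleftarrow{d}\mathcal{I}/\overline{\mathcal{I}^2},
\]
dualizes into $B$, and uses $\operatorname{Hom}_B(\mathcal{D}_{B|\mathbb{R}},B)\simeq\operatorname{Der}(B)$ together with the freeness of $\mathcal{D}_{R|\mathbb{R}}$ on the $dx^i$ to identify $\operatorname{Der}(B)$ with $\ker d^\vee\subseteq\bigoplus_i B\,\partial/\partial x^i$. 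Unwinding $d^\vee$ shows that a representative $\sum X^i\partial/\partial x^i$ lies in $\operatorname{Der}_{\mathcal I}(R)$, giving surjectivity in one stroke.

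Your route needs the lemma that a $\pi$-derivation $D\colon R\to B$ with $D(x^i)=0$ is identically zero. Your sketch (``Hadamard, hence agree on polynomials, hence on jets, hence by closedness of $\mathcal I$ everywhere'') is the right intuition but is not a proof as written: iterating Hadamard gives $D(f)\in\bigcap_{p,k}(\mathfrak m_p^{\,k}+\mathcal I)/\mathcal I$, and showing this intersection is zero is precisely where the Fr\'echet/closedness input from \cite{Navarro} is required. In other words, the fact you propose to cite --- that a derivation $R\to R/\mathcal I$ is determined by its values on the $x^i$ --- is essentially the content of the dualized fundamental sequence the paper uses. So your argument can be completed, but the cleanest way to fill the gap is exactly the paper's route; citing \cite{Navarro} for the ``determined by coordinates'' statement in the form you phrase it may be harder to pin down than citing the fundamental sequence and $\operatorname{Hom}(\mathcal D,-)\simeq\operatorname{Der}(-)$ directly.
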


\begin{proof}
We adapt the argument of \cite[Lemma 2.1.2] {Simis} to the situation of differential spaces (see \cite{Navarro}). We have a map $\varphi :\operatorname{Der}_{\mathcal{I}}\left(\mathcal{C}^{\infty }\left(\mathbb{R}^{n}\right)\right)\rightarrow \operatorname{Der}\left(\mathcal{C}^{\infty }\left(\mathbb{R}^{n}\right) /\mathcal{I}\right)$ given by $ \varphi ( X) :=( f+\mathcal{I} \mapsto X( f) +\mathcal{I})$
for $ f\in \mathcal{C}^{\infty }\left(\mathbb{R}^{n}\right)$. Recall that $ \operatorname{Der}\left(\mathcal{C}^{\infty }\left(\mathbb{R}^{n}\right)\right)$ is the free $ \mathcal{C}^{\infty }\left(\mathbb{R}^{n}\right)$-module generated by the $ \partial /\partial x^{i} ,\ i=1,\dotsc ,n$ (see, e.g., \cite{Michor} or \cite{Navarro}). Now assume $ X=\sum _{i=1}^{n} X^{i}\frac{\partial }{\partial x^{i}} \in \ker( \varphi )$ with $ X^{i} \in \mathcal{C}^{\infty }\left(\mathbb{R}^{n}\right) ,\ i=1,\dotsc ,n$. This means that $X( f) \in \mathcal{I}$ for all $ f\in \mathcal{C}^{\infty }\left(\mathbb{R}^{n}\right)$. Choosing $ f=x^{j}$ we find $ X^{j} \in \mathcal{I}$. This shows that $ \ker( \varphi ) =\mathcal{I}\operatorname{Der}\left(\mathcal{C}^{\infty }\left(\mathbb{R}^{n}\right)\right)$.

It remains to prove that $ \varphi $ is onto. We now rely on the fact that $ \mathcal{C}^{\infty }\left(\mathbb{R}^{n}\right)$ is a nuclear Fréchet algebra. Let $ \Delta $ be the kernel of the multiplication morphism $ \mathcal{C}^{\infty }\left(\mathbb{R}^{n}\right)\hat{\otimes }_{\mathbb{R}}\mathcal{C}^{\infty }\left(\mathbb{R}^{n}\right)\rightarrow \mathcal{C}^{\infty }\left(\mathbb{R}^{n}\right)$
where $ \hat{\otimes }$ denotes the completed tensor product.
It is an ideal in $ \mathcal{C}^{\infty }\left(\mathbb{R}^{n}\right)\hat{\otimes }_{\mathbb{R}}\mathcal{C}^{\infty }\left(\mathbb{R}^{n}\right) \simeq \mathcal{C}^{\infty }\left(\mathbb{R}^{2n}\right)$.
The \textit{module of differentials} $ \mathcal{D}_{\left(\mathcal{C}^{\infty }\left(\mathbb{R}^{n}\right) /\mathcal{I}\right) \ |\mathbb{R}}$ is then defined to be $ \Delta /\overline{\Delta ^{2}}$ (see \cite[Subsection 10.1]{Navarro}).
Here
$ \overline{\Delta ^{2}}$ is the closure of $ \Delta ^{2}$ in the Fréchet topology.
The \emph{second fundamental sequence} for the module
 $\mathcal{D}_{\left(\mathcal{C}^{\infty }\left(\mathbb{R}^{n}\right) /\mathcal{I}\right) \ |\mathbb{R}}$ of differentials (see \cite[Subsection 10.2]{Navarro}) is given by
$$ 0\leftarrow \mathcal{D}_{\left(\mathcal{C}^{\infty }\left(\mathbb{R}^{n}\right) /\mathcal{I}\right) \ |\mathbb{R}}\leftarrow \left(\mathcal{C}^{\infty }\left(\mathbb{R}^{n}\right) /\mathcal{I}\right)\hat{\otimes }_{\mathcal{C}^{\infty }\left(\mathbb{R}^{n}\right)}\mathcal{D}_{\mathcal{C}^{\infty }\left(\mathbb{R}^{n}\right) \ |\mathbb{R}}\xleftarrow{d}\mathcal{I} /\overline{\mathcal{I}^{2}}$$
where $ \overline{\mathcal{I}^{2}}$ is the closure of $ \mathcal{I}^{2}$ in the Fréchet topology. We apply the functor $ \operatorname{Hom}_{\mathcal{C}^{\infty }\left(\mathbb{R}^{n}\right) /\mathcal{I}}\left( \ \ ,\mathcal{C}^{\infty }\left(\mathbb{R}^{n}\right) /\mathcal{I}\right)$ to this sequence and recall (see \cite[Theorem 10.4]{Navarro}) that $ \operatorname{Hom}_{\mathcal{C}^{\infty }\left(\mathbb{R}^{n}\right) /\mathcal{I}}\left( \ \mathcal{D}_{\left(\mathcal{C}^{\infty }\left(\mathbb{R}^{n}\right) /\mathcal{I}\right) \ |\mathbb{R}} ,\mathcal{C}^{\infty }\left(\mathbb{R}^{n}\right) /\mathcal{I}\right) \simeq \operatorname{Der}\left(\mathcal{C}^{\infty }\left(\mathbb{R}^{n}\right) /\mathcal{I}\right)$.
As a result we obtain that
$\operatorname{Der}\left(\mathcal{C}^{\infty }\left(\mathbb{R}^{n}\right) /\mathcal{I}\right)$ is the kernel of the map
\sloppy $ \operatorname{Hom}_{\mathcal{C}^{\infty }\left(\mathbb{R}^{n}\right) /\mathcal{I}}\left(\mathcal{I} /\overline{\mathcal{I}^{2}} ,\mathcal{C}^{\infty }\left(\mathbb{R}^{n}\right) /\mathcal{I}\right)\xleftarrow{d^{\lor }} \bigoplus _{i=1}^{n}\left(\mathcal{C}^{\infty }\left(\mathbb{R}^{n}\right) /\mathcal{I}\right)\frac{\partial }{\partial x^{i}} ,$
where we used the isomorphisms
\begin{align*} \left(\mathcal{C}^{\infty }\left(\mathbb{R}^{n}\right) /\mathcal{I}\right)\hat{\otimes }_{\mathcal{C}^{\infty }\left(\mathbb{R}^{n}\right)}\mathcal{D}_{\mathcal{C}^{\infty }\left(\mathbb{R}^{n}\right) \ |\mathbb{R}} &\simeq \bigoplus _{i=1}^{n}\left(\mathcal{C}^{\infty }\left(\mathbb{R}^{n}\right) /\mathcal{I}\right) dx^{i},\\
\operatorname{Hom}_{\mathcal{C}^{\infty }\left(\mathbb{R}^{n}\right) /\mathcal{I}}\left( \bigoplus _{i=1}^{n}\left(\mathcal{C}^{\infty }\left(\mathbb{R}^{n}\right) /\mathcal{I}\right) dx^{i} ,\mathcal{C}^{\infty }\left(\mathbb{R}^{n}\right) /\mathcal{I}\right)& \simeq \bigoplus _{i=1}^{n}\left(\mathcal{C}^{\infty }\left(\mathbb{R}^{n}\right) /\mathcal{I}\right)\frac{\partial }{\partial x^{i}}.
\end{align*}
In other words, we can understand an $ X\in \operatorname{Der}\left(\mathcal{C}^{\infty }\left(\mathbb{R}^{n}\right) /\mathcal{I}\right)$ as an element $ \sum _{i=1}^{n}\left( X^{i} +\mathcal{I}\right)\frac{\partial }{\partial x^{i}}$ with $ X^{i} \in \mathcal{C}^{\infty }\left(\mathbb{R}^{n}\right)$ from $ \bigoplus _{i=1}^{n}\left(\mathcal{C}^{\infty }\left(\mathbb{R}^{n}\right) /\mathcal{I}\right)\frac{\partial }{\partial x^{i}}$ which has to be also in $ \operatorname{ker}\left( d^{\lor }\right)$. This means that $ \sum _{i=1}^{n} X^{i}\frac{\partial f} {\partial x^{i}} \in \mathcal{I}$ \ for each $f\in \mathcal I$. Hence the representative $ X=\sum _{i=1}^{n} X^{i}\frac{\partial }{\partial x^{i}}$ is in $ \operatorname{Der}_{\mathcal{I}}\left(\mathcal{C}^{\infty }\left(\mathbb{R}^{n}\right)\right)$ and we have proven that $ \varphi $ is onto.
\end{proof}
A theorem of B. Malgrange, \cite[Chapter VI, Th. 1.1']{Malgrange}, says that any ideal $ ( f_{1} ,\dotsc ,f_{k})$ in \ $ \mathcal{C}^{\infty }\left(\mathbb{R}^{n}\right)$ generated by real analytic functions $ f_{1} ,\dotsc ,f_{k}$ is closed. Hence we can apply the lemma to affine varieties.
Moreover, if $\mathcal{X}$ is a Nash subanalytic set (e.g. a semialgebraic set), then its vanishing ideal is closed. It is even forms a complemented subspace in the Fréchet algebra $\mathcal{C}^{\infty }\left(\mathbb{R}^{n}\right)$ \cite[Theorem 0.2.1]{BierSchwDuke}.

\begin{theorem}\label{thm:smoothder}
Let $\mathcal{Y} \subseteq \mathbb{R}^{n}$ the affine $\mathbb R$-variety given by the equations $f_{r} =0,\ r=1,\dotsc ,k$. Write $\mathbb{R} [\mathcal{Y} ]=\mathbb{R}\left[ x^{1} ,\dotsc ,x^{n}\right] /(f_{1} ,\dotsc ,f_{k} )$ and $\mathcal{C}^{\infty } (\mathcal{Y} )=\mathcal{C}^{\infty }\left(\mathbb{R}^{n}\right) /(f_{1} ,\dotsc ,f_{k} )$. If $X_{1} ,\dotsc ,\ X_{l}$ generate $\operatorname{Der} (\mathbb{R} [\mathcal{Y} ])$ as an $\mathbb{R} [\mathcal{Y} ]$-module then they also generate $\operatorname{Der}\left(\mathcal{C}^{\infty } (\mathcal{Y} )\right)$ as a $\mathcal{C}^{\infty } (\mathcal{Y} )$-module.
\end{theorem}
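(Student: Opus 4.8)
The plan is to reduce the statement to a single base--change fact about $\mathcal{C}^\infty(\mathbb{R}^n)$ over the polynomial ring. Write $P=\mathbb{R}[x^1,\dots,x^n]$, $I=(f_1,\dots,f_k)\subseteq P$ and $\mathcal{I}=(f_1,\dots,f_k)\,\mathcal{C}^\infty(\mathbb{R}^n)$. Since the $f_r$ are polynomials, hence real analytic, $\mathcal{I}$ is closed by Malgrange's theorem, so Lemma~\ref{lem:diffder} applies and yields $\operatorname{Der}(\mathcal{C}^\infty(\mathcal{Y}))\simeq\operatorname{Der}_{\mathcal{I}}(\mathcal{C}^\infty(\mathbb{R}^n))/\mathcal{I}\operatorname{Der}(\mathcal{C}^\infty(\mathbb{R}^n))$, while the polynomial counterpart (the lemma of \cite{Simis} quoted above, cf.\ Proposition~\ref{prop:DerGen}) gives $\operatorname{Der}(\mathbb{R}[\mathcal{Y}])\simeq\operatorname{Der}_I(P)/I\operatorname{Der}(P)$. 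Under these isomorphisms a derivation is encoded by its coefficient vector, so $\operatorname{Der}_I(P)$ and $\operatorname{Der}_{\mathcal{I}}(\mathcal{C}^\infty(\mathbb{R}^n))$ appear as submodules of the free modules $P^n$ and $\mathcal{C}^\infty(\mathbb{R}^n)^n$, and the given $X_1,\dots,X_l$ lift to generators of $\operatorname{Der}_I(P)$ over $P$. Because $I\operatorname{Der}(P)=I\,P^n\subseteq\operatorname{Der}_I(P)$ maps onto $\mathcal{I}\operatorname{Der}(\mathcal{C}^\infty(\mathbb{R}^n))\subseteq\operatorname{Der}_{\mathcal{I}}(\mathcal{C}^\infty(\mathbb{R}^n))$, it suffices to prove the single equality $\operatorname{Der}_{\mathcal{I}}(\mathcal{C}^\infty(\mathbb{R}^n))=\mathcal{C}^\infty(\mathbb{R}^n)\cdot\operatorname{Der}_I(P)$ of submodules of $\mathcal{C}^\infty(\mathbb{R}^n)^n$; passing to the quotients then shows the images of $X_1,\dots,X_l$ generate $\operatorname{Der}(\mathcal{C}^\infty(\mathcal{Y}))$.

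Next I would present both modules as finite intersections of kernels. For $r=1,\dots,k$ let $\phi_r\colon P^n\to P/I$ be the $P$-linear map $(g^1,\dots,g^n)\mapsto\sum_i g^i\,\partial f_r/\partial x^i+I$; then, by the description underlying Proposition~\ref{prop:DerGen}, $\operatorname{Der}_I(P)=\bigcap_{r=1}^k\ker\phi_r$. Applying $\mathcal{C}^\infty(\mathbb{R}^n)\otimes_P(-)$, and using $\mathcal{C}^\infty(\mathbb{R}^n)\otimes_P P^n=\mathcal{C}^\infty(\mathbb{R}^n)^n$ together with the right--exactness identification $\mathcal{C}^\infty(\mathbb{R}^n)\otimes_P(P/I)=\mathcal{C}^\infty(\mathbb{R}^n)/\mathcal{I}$, turns $\phi_r$ into the analogous map $\psi_r\colon\mathcal{C}^\infty(\mathbb{R}^n)^n\to\mathcal{C}^\infty(\mathbb{R}^n)/\mathcal{I}$, and by Lemma~\ref{lem:diffder} one has $\operatorname{Der}_{\mathcal{I}}(\mathcal{C}^\infty(\mathbb{R}^n))=\bigcap_{r=1}^k\ker\psi_r$.

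The crux is then the \emph{flatness of $\mathcal{C}^\infty(\mathbb{R}^n)$ as an $\mathbb{R}[x^1,\dots,x^n]$-module}. Granting it, tensoring each exact sequence $0\to\ker\phi_r\to P^n\xrightarrow{\phi_r}P/I$ with $\mathcal{C}^\infty(\mathbb{R}^n)$ gives $\mathcal{C}^\infty(\mathbb{R}^n)\otimes_P\ker\phi_r\cong\ker\psi_r$ compatibly inside $\mathcal{C}^\infty(\mathbb{R}^n)^n$; flatness also commutes with finite intersections of submodules of $P^n$ and keeps the inclusion $\operatorname{Der}_I(P)\hookrightarrow P^n$ injective, so $\mathcal{C}^\infty(\mathbb{R}^n)\cdot\operatorname{Der}_I(P)=\mathcal{C}^\infty(\mathbb{R}^n)\otimes_P\operatorname{Der}_I(P)=\bigcap_r\ker\psi_r=\operatorname{Der}_{\mathcal{I}}(\mathcal{C}^\infty(\mathbb{R}^n))$, which is the required equality. (Applying the same computation to $I\,P^n$ and passing to quotients in fact yields $\operatorname{Der}(\mathcal{C}^\infty(\mathcal{Y}))\simeq\mathcal{C}^\infty(\mathcal{Y})\otimes_{\mathbb{R}[\mathcal{Y}]}\operatorname{Der}(\mathbb{R}[\mathcal{Y}])$, through which $X_a$ maps to the smooth derivation with the same coefficient vector, and a generating set maps to a generating set.)

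The main obstacle is the flatness input. Concretely it asserts that every smooth relation $\sum_i g^i\,\partial f_r/\partial x^i=\sum_s c_{rs}f_s$ among the polynomials $\partial f_r/\partial x^i$ and $f_s$ is a $\mathcal{C}^\infty(\mathbb{R}^n)$-linear combination of polynomial relations --- equivalently, smooth syzygies of analytic functions are generated by analytic syzygies --- and this is classical, belonging to the circle of flatness/preparation results of Malgrange \cite{Malgrange} (one reduces to germs at points of $\mathbb{R}^n$ and to formal power series). The only other point requiring attention, and it is routine, is to check that the identifications of the various modules $\operatorname{Der}(-)$ with kernels of maps between free modules --- coming from Lemma~\ref{lem:diffder}, Proposition~\ref{prop:DerGen} and the extension of scalars --- are mutually compatible.
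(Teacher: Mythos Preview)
Your proposal is correct and takes essentially the same approach as the paper: both reduce, via Lemma~\ref{lem:diffder} and the description in Proposition~\ref{prop:DerGen}, to showing that the smooth kernel of the tangentiality conditions coincides with the $\mathcal{C}^\infty(\mathbb{R}^n)$-span of the polynomial kernel, and both settle this by the flatness of $\mathcal{C}^\infty(\mathbb{R}^n)$ over $\mathbb{R}[x^1,\dots,x^n]$. The only cosmetic difference is that the paper packages the conditions into the single matrix $M$ of \eqref{eq:M} and phrases flatness as the preservation of exactness of a free resolution of $\operatorname{coker}(M)$ (citing \cite{HS13}), whereas you invoke flatness abstractly and organize the conditions as an intersection $\bigcap_r\ker\phi_r$.
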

\begin{proof}
In order to determine the generators of the tangential derivations $\operatorname{Der}_{(f_{1} ,\dotsc ,f_{k} )}\left(\mathcal{C}^{\infty }\left(\mathbb{R}^{n}\right)\right)$ we use the same argument as in Proposition \ref{prop:DerGen}. We can do this because by Lemma \ref{lem:diffder} we have
\begin{equation*}
\operatorname{Der}\left(\mathcal{C}^{\infty } (\mathcal{X} )\right) =\operatorname{Der}_{(f_{1} ,\dotsc ,f_{k} )}\left(\mathcal{C}^{\infty }\left(\mathbb{R}^{n}\right)\right) /(f_{1} ,\dotsc ,f_{k})\operatorname{Der}\left(\mathcal{C}^{\infty }\left(\mathbb{R}^{n}\right)\right) .
\end{equation*}
Accordingly, we need to determine the generators of $\ker(\mathcal{M})$ where
\begin{equation*}
\mathcal{M} :\mathcal{C}^{\infty }\left(\mathbb{R}^{n}\right)^{n+k}\rightarrow \mathcal{C}^{\infty }\left(\mathbb{R}^{n}\right)^{k} ,
\end{equation*}
is the matrix \eqref{eq:M}, seen as a morphism of free $\mathcal{C}^{\infty }\left(\mathbb{R}^{n}\right)$-modules. Write
$$M:\mathbb{R}[ x^{1} ,\dotsc ,x^{n}]^{n+k}\rightarrow \mathbb{R}[ x^{1} ,\dotsc ,x^{n}]^{k}$$
for the corresponding morphism of free $\mathbb{R}[x^{1} ,\dotsc ,x^{n}]$-modules. Let
$(F:=\bigoplus _{i\geq 0} F_{i} ,\delta )$
be a free resolution of the module $\operatorname{coker}( M)$. Note that the matrices representing the differentials  $\delta _{i} :F_{i}\rightarrow F_{i-1}$ of $F$ have polynomial entries. We want to show that $\mathcal{C}^{\infty }\left(\mathbb{R}^{n}\right) \otimes _{\mathbb{R}\left[ x^{1} ,\dotsc ,x^{n}\right]}\ker( M)$ is isomorphic to the $\mathcal{C}^{\infty }\left(\mathbb{R}^{n}\right)$-module. But, by a standard argument (see \cite{HS13}), the complex of free $\mathcal{C}^{\infty }\left(\mathbb{R}^{n}\right)$-modules $\mathcal{F:=C}^{\infty }\left(\mathbb{R}^{n}\right) \otimes _{\mathbb{R}\left[ x^{1} ,\dotsc ,x^{n}\right]} F$ is exact in homological degree $j\geq 1$.
\end{proof}

For semialgebraic sets there is in general no such a simple statement like Theorem \ref{thm:smoothder}. However, Edward Bierstone \cite{LiftingIso} and Gerald Schwarz \cite{LiftingHomo} investigated the case when the locally semialgebraic set $\mathcal{X}$ is isomorphic via a Hilbert map to the orbit space $N/K$ of a $K$-manifold $N$ by a compact Lie group $K$. (If $N$ is a $K$-module then $N/K$ is isomorphic to a semialgebraic set $\mathcal{X}$.) It turns out that only those derivations $X\in \operatorname{Der}\left(\mathcal{C}^{\infty }( N/K)\right)$ that are tangent to all orbit type strata can be lifted to $\operatorname{Der}\left(\mathcal{C}^{\infty }( N)\right)^{K}$
\cite[Lemma 3.7]{BierstoneIMPA}. They are called \textit{smooth vector fields on}
$N/K$ and the set of smooth vector fields will be denoted by $\mathfrak{X}^{\infty }( N/K)$. We use one of their results to investigate $\operatorname{Der}(\mathcal{X})$ for the simple cone $\mathcal{X}$.

\begin{proposition}{\cite[Proposition 3.9]{BierstoneIMPA}}
\label{prop:codim1}
Let $K$ be a compact Lie group, $N$ be a smooth $K$-manifold and $X\in \operatorname{Der}\left(\mathcal{C}^{\infty }( N/K)\right)$ . Then $X\in \mathfrak{X}^{\infty }( N/K)$ if and only
if $X$ is tangent to the codimension one strata in $N/K$. In particular, $\mathfrak{X}^{\infty }( N/K) =\operatorname{Der}\left(\mathcal{C}^{\infty }( N/K)\right)$
if and only if there are no codimension one strata in $N/K$.
\end{proposition}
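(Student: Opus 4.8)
The plan is as follows. The ``only if'' direction is immediate: a smooth vector field on $N/K$ is, by construction, one that lifts to a $K$-invariant vector field on $N$, hence is tangent to every orbit-type stratum of $N/K$, in particular to the codimension-one ones. Conversely, by \cite[Lemma 3.7]{BierstoneIMPA} a derivation of $\mathcal C^\infty(N/K)$ already lies in $\mathfrak X^\infty(N/K)$ once it is tangent to \emph{all} strata, so the entire content is the implication: \emph{tangent to the codimension-one strata} $\Rightarrow$ \emph{tangent to every stratum}. I would prove this by induction on the codimension of the stratum, the cases of codimension $0$ (the principal stratum) and $1$ being trivially covered, respectively by definition and by hypothesis. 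The ``in particular'' assertion then follows from the main equivalence together with the observation that near a codimension-one stratum the local model $\mathbb R^{n-1}\times[0,\infty)$ carries the ``outward'' derivation $\partial_t$, which fails to preserve the ideal of $\mathbb R^{n-1}\times\{0\}$.

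For the inductive step, fix a stratum $S$ of codimension $c\ge 2$ and assume $X$ is tangent to every stratum of codimension $<c$. Tangency to $S$ is local along $S$, so I would localize at a point $p\in S$ via the Slice Theorem: a $K$-saturated neighbourhood of $K\cdot p$ is $K$-diffeomorphic to $K\times_H W$ with $H=K_p$ compact and $W$ the slice representation; splitting $W=W^H\oplus U$, the corresponding chart in $N/K$ is a differential space isomorphic to $\mathbb R^s\times(U/H)$, where $s=\dim W^H$, $\;U^H=\{0\}$, $\;\dim(U/H)=c$, the stratum $S$ corresponds to $\mathbb R^s\times\{o\}$ with $o$ the vertex, and the codimension-one strata meeting the chart are the products $\mathbb R^s\times T$ with $T$ a codimension-one stratum of $U/H$. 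Thus it suffices to show that a derivation of $\mathcal C^\infty(\mathbb R^s\times U/H)$ tangent to all codimension-one strata and, inductively, to all strata of codimension $<c$, is tangent to $\mathbb R^s\times\{o\}$.

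To handle this I would pass to a Hilbert embedding: choosing homogeneous generating invariants realizes $U/H$ as a closed semialgebraic (weighted) cone $\mathcal Z\subseteq\mathbb R^m$ with vertex $o$, spanning $\mathbb R^m$ since $U^H=\{0\}$ forces the absence of linear invariants. By Lemma~\ref{lem:diffder} and the computation in its proof — available because the vanishing ideal $\mathcal I$ of $\mathbb R^s\times\mathcal Z$ is closed, indeed complemented, in $\mathcal C^\infty(\mathbb R^{s+m})$ by Malgrange's theorem \cite{Malgrange} and \cite[Theorem 0.2.1]{BierSchwDuke} — the derivation is represented by an ambient smooth vector field $\tilde X=\sum_i\tilde X^i\partial_i$ with $\tilde X(\mathcal I)\subseteq\mathcal I$, which on the principal stratum is tangent to $\mathbb R^s\times\mathcal Z$ and, by the inductive hypothesis, is tangent to each lower stratum. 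What remains is to show that the $\mathcal Z$-component of $\tilde X$ at the vertex, $v:=\tilde X(o)\in\mathbb R^m$, vanishes. Since every $f\in\mathcal I$ has vanishing linear part at $o$ (by the homogeneity of the $u_j$ together with the spanning of $\mathcal Z$), differentiating $\tilde X(f)\in\mathcal I$ once more shows that the quadratic (leading) part $Q=\operatorname{Hess}_o f$ must satisfy $Qv=0$. One is thereby led to the claim that, for a codimension-$\ge2$ orbit-space cone with $U^H=\{0\}$, the forms $\{\operatorname{Hess}_o f:f\in\mathcal I\}$ have no common nonzero null vector, whence $v=0$; for a codimension-one stratum this intersection is in general a half-line, which is exactly what allows a derivation to point across such a stratum, as $[0,\infty)$ illustrates.

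I expect the genuinely hard part to be precisely that last claim: that the local orbit-space geometry at a codimension-$\ge 2$ stratum admits no nontangential derivation. This is the property that distinguishes orbit spaces $N/K$ from arbitrary stratified semialgebraic sets, and proving it rigorously requires the fine structure of $N/K$ near non-principal strata — slice representations with $U^H=\{0\}$, the principal-isotropy/minimal-orbit analysis, and the closedness of vanishing ideals of Nash subanalytic sets — rather than any purely formal stratification argument. This, or an equivalent statement, is the technical heart of \cite[Proposition 3.9]{BierstoneIMPA}, and I would not try to improve on that treatment; in our application $U/H$ is one-dimensional, so the assertion is immediate there.
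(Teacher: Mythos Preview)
The paper does not prove this proposition at all: it is stated as a citation of \cite[Proposition 3.9]{BierstoneIMPA} and used as a black box to conclude that for the simple cone (and more generally for symplectic orbifolds) one has $\mathfrak X^\infty(N/K)=\operatorname{Der}(\mathcal C^\infty(N/K))$ because there are no codimension-one strata. There is therefore nothing in the paper to compare your attempt against.

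What you have written is a plausible outline of how Bierstone's argument might go, and you are candid that the decisive step --- that at a stratum of codimension $\ge 2$ the local cone $U/H$ with $U^H=\{0\}$ admits no nontangential derivation --- is the technical heart you are not supplying. Your Hessian criterion (``the quadratic parts of the relations have no common nonzero null vector'') is not quite the right formulation: the relations among homogeneous invariants need not have nontrivial quadratic part at the vertex (think of $U/H$ embedded by invariants of degrees $\ge 2$ whose relations start in degree $\ge 3$), so the intersection of the kernels of the $\operatorname{Hess}_o f$ can be all of $\mathbb R^m$ without contradicting anything. The obstruction to a nontangential derivation at the vertex comes instead from the inequalities (the semialgebraic structure of $\mathcal Z$), not from the ideal of equalities alone; this is where the Procesi--Schwarz description and Bierstone's analysis enter. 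So your reduction via the slice theorem and Hilbert embedding is sound as scaffolding, but the endgame needs a different mechanism than the one you sketched.
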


In the case of the simple cone $\mathcal{X}$ there are no codimension one strata. By Theorem \ref{thm:smoothder} generators for the derivations of $\mathcal{C}^{\infty }\left(\mathbb{R}^{3}\right) /\left( x^{3} x^{3} -x^{1} x^{2}\right)$
are given by the representatives \eqref{eq:genDHam} and \eqref{eq:genEuler}. Derivations tangent to the union of strata
$$\mathcal{X} =\left\{\left( x^{1} ,x^{2} ,x^{3}\right) \in \mathbb{R}^{3} \ |\ x^{3} x^{3} -x^{1} x^{2} ,\ x^{1} \geq 0,\ x^{2} \geq 0\right\}$$
are in turn generated by (the infinitely many) vector fields of the form $Y_{i} =\sum _{j=1}^{3} Y_{i}^{j}\left( x^{1} ,x^{2} ,x^{3}\right)\frac{\partial }{\partial x^{j}}$ such that 
\begin{align*}
Y_{1}{}_{|\mathcal{H}} &=4x^{3}\frac{\partial }{\partial x^{2}} +2x^{1}\frac{\partial }{\partial x^{3}} ,\ \ Y_{2}{}_{|\mathcal{H}} =-4x^{3}\frac{\partial }{\partial x^{1}} -2x^{2}\frac{\partial }{\partial x^{3}} ,\\
Y_{3}{}_{|\mathcal{H}} &=2x^{2}\frac{\partial }{\partial x^{2}} -2x^{1}\frac{\partial }{\partial x^{1}} ,\ \ Y_{4}{}_{|\mathcal{H}} =2x^{1}\frac{\partial }{\partial x^{1}} +2x^{3}\frac{\partial }{\partial x^{3}} ,
\end{align*}
where
$\mathcal{H} =\left\{\left( x^{1} ,x^{2} ,x^{3}\right) \in \mathbb{R}^{3} \ |\ x^{1} \geq 0,\ x^{2} \geq 0\right\}$.
As the $Y_{j}$ vanish at $( 0,0,0)$ for $j=1,2,3,4$ they are tangent to the stratum $\{( 0,0,0)\} \subseteq \mathcal{X}$.
Modulo $\mathcal{I}\operatorname{Der}(\mathcal{X})$ the $Y_{j} ,\ j=1,2,3,4$ coincide with the four generators of \eqref{eq:genDHam} and \eqref{eq:genEuler}.

For the simple cone $\mathcal{X}$ we can now use the same formulas as in Section \ref{sec:dc} to construct a closed $2$-form
$$\omega \in \operatorname{Alt}_{\mathcal{C}^{\infty }( \mathcal X)}^{2}\left(\operatorname{Der}\left(\mathcal{C}^{\infty }(\mathcal{X})\right) ,\mathcal{C}^{\infty }(\mathcal{X})\right)$$ in the naive de Rham complex of the Lie-Rinehart
algebra $ \left(\operatorname{Der}\left(\mathcal{C}^{\infty }(\mathcal{X})\right) ,\mathcal{C}^{\infty }(\mathcal{X})\right)$. In order to see that it is non-degenerate we apply the functor
$ \mathcal{C}^{\infty }\left(\mathbb{R}^{n}\right) \otimes _{\mathbb{R}\left[ x^{1} ,x^2 ,x^{3}\right]}( \ )$ to the exact sequence \eqref{eq:res}. Using the same type of argument as in the proof of Theorem \ref{thm:smoothder} we see that the result is exact.

\section{Linear symplectic orbifolds}\label{sec:orbifold}

The considerations of Section \ref{sec:expl} can actually be adapted to find symplectic forms in the sense of our definition on linear symplectic orbifolds, generalizing the case of the simple cone (see Section \ref{sec:Diff}) and avoiding constructive invariant theory. By a linear \emph{symplectic orbifold} (see \cite{FHS, HSSorb, HSStorus}) we mean the differential space $\left( V/\Gamma ,\mathcal{C}^{\infty }( V)^{\Gamma }\right)$ where $V/\Gamma $ is the space of $\Gamma $-orbits in $V$ and $\Gamma \rightarrow \operatorname{U}( V)$ is a unitary representation of a finite group $\Gamma $ on a finite dimensional hermitean vector space $V$. By a theorem of H. Weyl \cite{Weyl} there is a fundamental system of invariants $u_{1} ,\dotsc ,u_{n} \in \mathbb{R}[ V]^{\Gamma }$ such that any other invariant $f\in \mathbb{R}[ V]^{\Gamma }$ can be written as the composite $f=F( u_{1} ,\dotsc ,u_{n})$ for some $F\in \mathbb{R}[ x^{1} ,\dotsc ,x^{n}]$. The map $\boldsymbol{u} =( u_{1} ,\dotsc ,u_{n}) :V\rightarrow \mathbb{R}^{n}$ defines an embedding of $\underline{\boldsymbol{u}} :V/\Gamma \rightarrow \mathbb{R}^{n}$, referred to as a \textit{Hilbert embedding}, whose image $\mathcal{X} :=\boldsymbol{u}( V)$ is a semialgebraic set. The Zarisky closure of $\mathcal{X}$ is described by the ideal $(f_{1} ,\dotsc ,f_{k})$ of relations among the $u_{1},\dotsc,u_{n}$ and the inequalities cutting out $\mathcal{X}$ from its Zarisky closure can be determined from $u_{1} ,\dotsc ,u_{n}$ according to a recipe of Claudio Procesi and Gerald Schwarz \cite{PS}. The theorem of Gerald Schwarz on differentiable invariants \cite{DI} says that any smooth $\Gamma $-invariant function $f\in \mathcal{C}^{\infty }( V)^{\Gamma }$ can be written as the composite $f=F( u_{1} ,\dotsc ,u_{n})$ for some $F\in \mathcal C^{\infty }\left( \mathbb R^{n}\right)$. As a consequence the Hilbert embedding is actually a diffeomorphism from the differential space $\left( V/\Gamma ,\mathcal{C}^{\infty }( V)^{\Gamma }\right)$ to the differential space $\left(\mathcal{X} ,\mathcal{C}^{\infty }(\mathcal{X})\right)$. In this manner $\mathcal{C}^{\infty }(\mathcal{X})$ inherits a Poisson bracket $\{\ ,\ \}$ from $\mathcal{C}^{\infty }( V)^{\Gamma }$.

Our aim is now to construct a non-degenerate closed form $\omega \in \operatorname{Alt}_{\mathcal{C}^{\infty }(\mathcal{X})}^{2}\left(\operatorname{Der}\left(\mathcal{C}^{\infty }(\mathcal{X})\right) ,\mathcal{C}^{\infty }(\mathcal{X})\right)$
from the restriction $\omega _{K}{}_{|\operatorname{Der}\left(\mathcal{C}^{\infty }( V)\right)^{\Gamma }} \in \operatorname{Alt}_{\mathcal{C}^{\infty }( V)^{\Gamma }}^{2}\left(\operatorname{Der}\left(\mathcal{C}^{\infty }( V)\right)^{\Gamma } ,\mathcal{C}^{\infty }( V)^{\Gamma }\right)$ of the Kähler form\footnote{Here Kähler form means a Kähler form in the sense of differential geometry. It should not be confused with a \emph{Kähler differential}.}
$$\omega _{K} \in \operatorname{Alt}_{\mathcal{C}^{\infty }( V)}^{2}\left(\mathcal{C}^{\infty }( V) ,\mathcal{C}^{\infty }( V)\right)=\Omega^2(V)$$
to invariant derivations. By Gerald Schwarz's  \cite[Theorem]{LiftingHomo} there is a short split exact sequence
\begin{align}
0\leftarrow \operatorname{Der}\left(\mathcal{C}^{\infty }( V)^{\Gamma }\right)\underset{\Lambda }{\overset{\pi }{\leftrightarrows }}\operatorname{Der}\left(\mathcal{C}^{\infty }( V)\right)^{\Gamma }\leftarrow \operatorname{Der}_{\Gamma }\left(\mathcal{C}^{\infty }( V)\right)^{\Gamma }\leftarrow 0,
\end{align}
where $\operatorname{Der}_{\Gamma }\left(\mathcal{C}^{\infty }( V)\right)^{\Gamma } =\left\{X\in \operatorname{Der}\left(\mathcal{C}^{\infty }( V)\right)^{\Gamma } \ |\ X_{|\mathcal{C}^{\infty }( V)^{\Gamma }} =0\right\}$. We now want to prove that $\operatorname{Der}_{\Gamma }\left(\mathcal{C}^{\infty }( V)\right)^{\Gamma } =\{0\}$ in order to deduce the non-degeneracy of $\omega _{K}{}_{|\operatorname{Der}\left(\mathcal{C}^{\infty }( V)\right)^{\Gamma }}$ along the lines of Lemma \ref{lem:Chris} and Corollary \ref{cor:nodeg}.

\begin{lemma}
Let $\Gamma \rightarrow \operatorname{U} (V)$ be a unitary representation of the finite group $\Gamma $ on the finite dimensional hermitean vector space $V$. Then any $X\in \operatorname{Der} (\mathcal{C}^{\infty }( V) )$ such that $X_{|\mathcal{C}^{\infty }( V)^{\Gamma }} =0$ has to vanish.
\end{lemma}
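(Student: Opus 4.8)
The plan is to run the argument of Lemma \ref{lem:Chris} again, simply replacing its complex-analytic ingredients (analytic integral curves, the Open Mapping Theorem) by their smooth counterparts, the extra input being the finiteness of $\Gamma$-orbits. First I would pass to the underlying real picture: set $N=\dim_{\mathbb R}V$, choose real linear coordinates $x^{1},\dots,x^{N}$ on $V$, and recall (e.g.\ from \cite{Michor} or \cite{Navarro}) that $\operatorname{Der}\big(\mathcal{C}^{\infty}(V)\big)$ is the free $\mathcal{C}^{\infty}(V)$-module on $\partial/\partial x^{1},\dots,\partial/\partial x^{N}$. Hence $X=\sum_{i}X^{i}\,\partial/\partial x^{i}$ with $X^{i}\in\mathcal{C}^{\infty}(V)$, i.e.\ $X$ is an ordinary smooth vector field on $V\cong\mathbb{R}^{N}$. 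Arguing by contradiction, assume $X\neq 0$ and pick $u_{0}\in V$ with $X(u_{0})\neq 0$.

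Next I would invoke the standard short-time existence theorem for ordinary differential equations (as in \cite{IlyaYak}, but now over $\mathbb R$): there is a $\delta>0$ and a smooth integral curve $z\colon(-\delta,\delta)\to V$ with $z(0)=u_{0}$ and $\dot z(t)=X(z(t))$ for all $t$. Since $\dot z(0)=X(u_{0})\neq 0$ the curve is non-constant, and because $(-\delta,\delta)$ is connected its image $C:=z\big((-\delta,\delta)\big)$ is therefore an \emph{infinite} subset of $V$. For any $f\in\mathcal{C}^{\infty}(V)^{\Gamma}$ the chain rule gives $\frac{d}{dt}f(z(t))=(Xf)(z(t))=0$, using that $X$ annihilates invariants by hypothesis; thus every $\Gamma$-invariant smooth function is constant on $C$, with value $f(u_{0})$.

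Finally I would extract the contradiction from invariant theory exactly as in Lemma \ref{lem:Chris}. Since $\Gamma$ is finite, each orbit $\Gamma u$ is finite, so the infinite set $C$ cannot be contained in the single orbit $\Gamma u_{0}$; pick $u=z(t_{1})\in C\setminus\Gamma u_{0}$. Invariant polynomials separate orbits of a finite group: given the disjoint finite sets $\Gamma u_{0}$ and $\Gamma u$, interpolate a polynomial vanishing on $\Gamma u_{0}$ and equal to $1$ on $\Gamma u$, then average over $\Gamma$; this produces $p\in\mathbb{R}[V]^{\Gamma}\subseteq\mathcal{C}^{\infty}(V)^{\Gamma}$ with $p(u)\neq p(u_{0})$. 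This contradicts the previous paragraph, which forces $p(u)=p(z(t_{1}))=p(z(0))=p(u_{0})$. Hence $X=0$.

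The only point where anything genuinely changes relative to the polynomial/analytic statement of Lemma \ref{lem:Chris} — and thus the step I would flag as the crux — is the replacement of ``the image of the analytic integral curve is open'' by ``the image of the smooth integral curve is infinite, hence meets more than one ($\Gamma$-)orbit''; the rest is routine, and notably the unitary/hermitean structure on $V$ plays no role beyond guaranteeing that $\Gamma$ acts by (real-)linear maps.
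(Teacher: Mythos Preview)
Your argument is correct and matches the paper's own proof essentially line for line: the paper also adapts Lemma \ref{lem:Chris} by replacing the Open Mapping step with the observation that the image of a non-constant smooth integral curve is infinite, and then uses that $\mathcal{C}^{\infty}(V)^{\Gamma}$ separates $\Gamma$-orbits to derive the contradiction. Your explicit construction of a separating invariant polynomial by interpolation and averaging just spells out what the paper takes for granted.
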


\begin{proof} Recall that $\mathcal{C}^{\infty }( V)^{\Gamma }$ separates $\Gamma $-orbits. It is now easy to adapt the argument of Lemma \ref{lem:Chris} to the smooth situation using fact that the image $\boldsymbol{z}(] -\epsilon ,\epsilon [)$ of an integral \ curve $] -\epsilon ,\epsilon [\rightarrow V,\ t\mapsto \boldsymbol{z}( t)$ of a non-constant vector field $X\in \operatorname{Der} (\mathcal{C}^{\infty }( V) )$ cannot be finite.
\end{proof}

We know that the invariant de Rham complex $\left(\operatorname{Alt}_{\mathcal C^{\infty}(V)^{\Gamma}}\left(\operatorname{Der}\left(\mathcal{C}^{\infty }( V)\right)^{\Gamma } ,\mathcal{C}^{\infty }( V)^{\Gamma }\right) ,d_{\operatorname{dR}}\right)$ is a subcomplex of the usual de Rham complex $\left( \Omega ( V) \simeq \operatorname{Alt}_{\mathcal{C}^{\infty }( V)}\left(\operatorname{Der}\left(\mathcal{C}^{\infty }( V)\right) ,\mathcal{C}^{\infty }( V)\right) ,d_{\operatorname{dR}}\right)$
of smooth differential forms on $V$. Arguing along the lines of Section \ref{sec:expl} we can define a closed, non-degenerate form $\omega \in \operatorname{Alt}_{\mathcal{C}^{\infty }( V)^{\Gamma }}^{2}\left(\operatorname{Der}\left(\mathcal{C}^{\infty }( V)^{\Gamma }\right) ,\mathcal{C}^{\infty }( V)^{\Gamma }\right)$ via the formula
$$\omega ( X,Y) =\omega _{K}( \Lambda ( X) ,\Lambda ( Y))$$
for $X,Y\in \operatorname{Der}\left(\mathcal{C}^{\infty }( V)^{\Gamma }\right)$. In fact, for any $\xi ,\eta $ such that $\pi ( \xi ) =X,\ \pi ( \eta ) =Y$ we have $\omega _{K}( \xi ,\eta ) =\omega _{K}( \Lambda ( X) ,\Lambda ( Y))$, so that we are not obliged to use the split $\Lambda $ to define $\omega $.

In conclusion, using the isomorphism $\mathcal{C}^{\infty }(V )^\Gamma\simeq\mathcal{C}^{\infty }( \mathcal X)$, we obtain a symplectic form on the the semialgebraic set $\boldsymbol{u}(V)=\mathcal{X}$, seen as a differential space in the sense of Sikorsky. Our naive de Rham complex $ \left(\operatorname{Alt}_{\mathcal{C}^{\infty }( \mathcal X)}\left(\operatorname{Der}\left(\mathcal{C}^{\infty }(\mathcal{X})\right) ,\mathcal{C}^{\infty }(\mathcal{X})\right) ,\operatorname{d}_{\operatorname{dR}}\right)$ coincides with the one studied recently by Bates, Cushman and Śniatycki in \cite{axioms}. The distinction between the derivations $ \operatorname{Der}\left(\mathcal{C}^{\infty }(\mathcal{X})\right)$ and the vector fields $ \mathfrak{X}^\infty(\mathcal{X})$ that the authors of \cite{axioms} emphasize is actually mute for symplectic orbifolds of cotangent lifted actions of unitary representations of finite groups as there are no strata of codimension one (see Proposition \ref{prop:codim1}).

\section{Conclusion and outlook}\label{sec:out}
We proposed a general framework of how to make sense of a symplectic form for a singular affine Poisson variety and showed that it is not void by exhibiting symplectic forms on categorical quotients of cotangent lifted representations of finite groups. In the case of the double cone we did this in two ways: 1.) using constructive invariant theory and 2.) using the work of Gerald Schwarz on lifting derivations from categorical quotients and orbit spaces. We were also able to elaborate a differential space version of the results to construct symplectic forms on orbit spaces of linear symplectic orbifolds.

The construction of Section \ref{sec:dc} was so simple because for the double cone $\operatorname{Der}( A) /\left( \Omega {_{A|\boldsymbol{k}}}^{\sharp }\right)$ had one generator. In principle, it may happen however that the number of generators of $\operatorname{Der}( A) /\left( \Omega {_{A|\boldsymbol{k}}}^{\sharp }\right)$ is $\geq 2$. If one is attempting to construct the symplectic form the simplest assumption to try is to suppose that the generators of $ \operatorname{Der}_{I}( A)$ not belonging to $ \Omega {_{A|\boldsymbol{k}}}^{\sharp }$ are isotropic. Then the calculations checking closedness of $ \omega $ appear to be straight forward. Yet the catch is that those generators are unique up to $ \Omega {_{A|\boldsymbol{k}}}^{\sharp }$, which in turn is typically not isotropic. It should be said that the concrete data of an affine Poisson algebra are typically bulky, if available at all. A more systematic empirical study must rely on computer implementations to be practically feasible. Of course, a conceptual way to prove the existence of the symplectic form is desirable. It is not unlikely that, using \cite{LiftingHomo, HSScomp} and with appropriate largeness assumptions it is possible to generalize the Marsden-Weinstein Theorem \cite{MW} to singular symplectic reduction. This could be attempted for symplectic reduction in the framework of complex algebraic geometry and in the context of Sjamaar-Lerman reduction \cite{SL}. From this it might be even possible to show directly that the strata of the symplectic quotient are symplectic manifolds by restricting the symplectic form. Another conceptual approach could be to construct a symplectic form on a symplectic singularity \cite{Beauville} from the resolution of the singularity that is employed in Beauville's definition.


\bibliographystyle{amsplain}
\bibliography{symplectic.bib}
\end{document}